\newtheorem{theorem}{Theorem}
\newtheorem{corollary}{Corollary}
\newtheorem{remark}{Remark}
\newtheorem{lemma}{Lemma}
\newtheorem{assumption}{Assumption}
\newcommand{\Sin}{s_{\text{in}}}
\newcommand{\Deps}{D_{\varepsilon}}
\newcommand{\seps}{s_{\varepsilon}}
\newcommand{\epsav}{_{\varepsilon,\text{av}}}
\newcommand{\av}{_{\text{av}}}
\newcommand{\bD}{\bar{D}}
\newcommand{\bs}{\bar{s}}
\newcommand{\hs}{\hat{s}}
\newcommand{\C}{\mathcal}
\newcommand{\MB}{\mathbb}
\begin{document}

\begin{frontmatter}
\title{Periodic Fractional Control in Bioprocesses for Clean Water and Ecosystem Health}

\author[Ajman,NDRC]{Kareem T. Elgindy}
\address[Ajman]{Department of Mathematics and Sciences, College of Humanities and Sciences, Ajman University,  Ajman P.O.Box 346, United Arab Emirates}
\address[NDRC]{Nonlinear Dynamics Research Center (NDRC), Ajman University, Ajman P.O.Box 346, United Arab Emirates}
\ead{k.elgindy@ajman.ac.ae}
\author[KFU]{Muneerah Al Nuwairan\corref{cor1}}
\ead{msalnuwairan@kfu.edu.sa}
\address[KFU]{Department of Mathematics and Statistics, King Faisal University, P.O. Box 400 AL Ahsa, Kingdom of Saudi Arabia} 
\author[XMUM]{Liew Siaw Ching}
\ead{siawching.liew@xmu.edu.my}
\address[XMUM]{Department of Mathematics and Applied Mathematics, School of Mathematics and Physics, Xiamen University Malaysia, Sepang 43900, Malaysia} 
\cortext[cor1]{Corresponding author}

\begin{abstract}
This paper introduces a novel fractional-order chemostat model (FOCM) incorporating Caputo fractional derivative with sliding memory (CFDS) to capture microbial memory effects in biological water treatment, addressing limitations of integer-order models that overlook time-dependent behaviors and fail to capture microbial memory such as delayed growth responses to past nutrient availability, history-dependent adaptation to inflow fluctuations, and persistent historical effects over hours to days, which are biologically critical in wastewater treatment. By optimizing periodic dilution rate control, we minimize the average pollutant output, constrained by treatment capacity and periodic boundaries. Key contributions include: (1) a rigorous fractional framework linking microbial kinetics to memory-driven control; (2) reduction to a 1D fractional-order differential equation (FDE) for computational efficiency; (3) proofs of optimal periodic solution (OPS) existence/uniqueness via Schauder's theorem and convexity; (4) bang-bang control derivation using fractional Pontryagin maximum principle (PMP) and Fourier-Gegenbauer pseudospectral (FG-PS) method with a specialized edge-detection technique to handle control discontinuities, ensuring efficient numerical resolution of switching points and discontinuities inherent in bang-bang strategies; and (5) a comprehensive sensitivity analysis revealing how the fractional order $\alpha$, scaling parameter $\vartheta$, and memory length $L$ critically influence system performance, with simulations showing up to 40\% reduction in substrate concentrations versus steady-state and demonstrating computational tractability through efficient discretization that scales favorably for practical implementation. Scientifically, this advances fractional calculus in bioprocesses, revealing memory's role in improving responsiveness and efficiency. Practically, it enables sustainable water treatment designs, improving pollutant removal in real-world plants like wastewater facilities by guiding microbial selection and control strategies for cost-effective, ecosystem-friendly operations amid variable inflows and environmental stresses.
\end{abstract}

\begin{keyword}
Bang-bang control \sep Caputo fractional derivative \sep Chemostat model \sep Fractional-order control \sep Memory effects \sep Optimal periodic control \sep Water treatment.\\[0.25em]
\textit{MSC:} 34A08; 37N25; 49J15; 92D25.
\end{keyword}
\end{frontmatter}

\begin{table}[ht]
\centering
\resizebox{0.45\textwidth}{!}{
\begin{tabular}{ll}
\toprule
\textbf{Acronym/Notation} & \textbf{Definition} \\
\midrule
a.e. & Almost everywhere \\
CFDS & Caputo Fractional Derivative with Sliding Memory \\
FD & Fractional Derivative \\
FDE & Fractional-Order Differential Equation \\
FFT & Fast Fourier Transform \\
FG-PS & Fourier--Gegenbauer Pseudospectral \\
FOCS & Fractional-Order Chemostat System \\
FOCM & Fractional-Order Chemostat Model \\
FOCP & Fractional-Order Optimal Control Problem \\
NLP & Nonlinear Programming \\
PBC & Periodic Boundary Condition \\
PMP & Pontryagin's Maximum Principle \\
OC & Optimal Control \\
OOFV & Optimal Objective Function Value \\
OPC & Optimal Periodic Control \\
OPS & Optimal Periodic Solution \\
PSC & Periodic Substrate Concentration \\
RFOCP & Reduced Fractional-Order Optimal Control Problem \\
RL & Riemann--Liouville \\
$\MB{N}$ & Set of positive integer numbers \\
$f\av$ & Average Value of a Periodic Function $f$ Over One Period \\
$f_{\varepsilon,\av}$ & Average Value of a Periodic Function $f_{\varepsilon}$ Over One Period \\
$\xi^*\av$ & Average Value of $\xi^*$ Over One Period \\
$f'$ & The First Derivative of a Function $f$ \\
$\Gamma(\cdot)$ & The Gamma Function \\
$E_\alpha(z)$ & The One-Parameter Mittag-Leffler Function with $\alpha > 0$, Defined by \\
& $\displaystyle{E_\alpha(z) = \sum_{n=0}^\infty \frac{z^n}{\Gamma(n\alpha + 1)}}$ \\
${}^{\text{MC}}_{L}D_t^\alpha f(t)$ & The (left-sided) Caputo Fractional Derivative of the Function $f$ with Sliding Memory, Defined by \\
&$\displaystyle{{}^{\text{MC}}_{L}D_t^\alpha f = \frac{1}{\Gamma(1-\alpha)} \int_{t-L}^t (t-\tau)^{-\alpha} f'(\tau) \, d\tau}$, where $\alpha \in (0, 1)$ is the fractional order and $L > 0$ is the,\\
& sliding memory length \\
${}^{\text{MC}}_{L+}D_t^\alpha f$ & The right-sided Caputo Fractional Derivative of the Function $f$ with Sliding Memory, Defined by \\
&$\displaystyle{{}^{\text{MC}}_{L+}D_t^\alpha f = -\frac{1}{\Gamma(1 - \alpha)} \int_t^{t+L} (\tau - t)^{-\alpha} f'(\tau)\, d\tau}$, where $\alpha \in (0, 1)$ is the fractional order and $L > 0$ is the,\\
& sliding memory length \\
$W^{1,1}_{\text{loc}}([a, b])$ & Sobolev space of functions whose first weak derivative exists and is locally integrable over the interval $[a, b]$\\
$AC_T$ & The absolutely continuous space of $T$-periodic functions with the norm $\|s\|_{AC} = \|s\|_{\infty} + \|s'\|_{L^1}$,\\
& where $\|s\|_{\infty} = \sup_{t \in [0,T]} |s(t)|$ and $\|s'\|_{L^1} = \int_0^T |s'(t)|\,dt$ \\
\bottomrule
\end{tabular}
}
\caption{List of Acronyms and Notations}
\end{table}

\section{Introduction}
\label{sec:Int}
The chemostat is an essential bioreactor in environmental engineering, enabling the cultivation of microorganisms for pollutant degradation and playing a crucial role in biological water treatment processes. Traditionally, chemostats are operated at steady state to ensure predictable and consistent performance in reducing pollutant concentrations. However, recent research has demonstrated that periodic operation, where control inputs such as dilution rates are varied over time, can outperform steady-state operation in both efficiency and pollutant removal, particularly when aligned with the specific growth kinetics of the microorganisms involved \cite{bayen2020improvement,elgindy2025sustainable,elgindy2023new}. This shift towards dynamic operation opens new avenues for optimizing bioprocesses, especially in the context of clean water production and ecosystem health preservation.

Conventional chemostat models rely on integer-order differential equations, which assume instantaneous responses and neglect the memory effects inherent in biological systems. However, numerous studies have demonstrated that biological systems often exhibit long-term memory and hereditary behaviors that are better captured by fractional calculus formulations \cite{tarasov2011fractional,magin2004fractional,caponetto2010fractional,chakraverty2023time}. This framework has been successfully applied to model memory effects in respiratory tissue, drug pharmacokinetics, and neural adaptation, providing a parsimonious and accurate representation of complex, multi-scale biological dynamics \cite{ionescu2017role}. Recent works further demonstrate the effectiveness of fractal-fractional analysis in water pollution modeling, highlighting the growing recognition of fractional approaches in environmental applications \cite{elgindy2025sustainable,loudahi2025fractal}. 

In reality, microbial growth and substrate degradation exhibit time-dependent behaviors influenced by past states, such as delayed responses to nutrient availability or environmental changes. To address this limitation, we propose a FOCM that incorporates memory effects through the CFDS. This approach captures long-term memory and non-local effects, providing a more accurate representation of complex biological interactions than traditional integer-order chemostat models. The sliding memory window introduces a finite memory effect, enabling the model to account for historical states over a bounded time horizon, which is particularly important for systems exhibiting slow adaptation or persistent environmental influence \cite{du2013measuring,ionescu2017role}.

Unlike the classical Caputo and RL FDs, which rely on fixed memory starting from an initial time, the CFDS employs a finite sliding memory window $[t - L, t]$, making it better suited for modeling biological systems with localized historical dependence. This structure preserves periodicity and ensures that the FD vanishes over one full cycle for periodic functions---a vital characteristic for enforcing PBCs as we demonstrate later in \ref{app:LIC1}. The selection of the CFDS is therefore motivated by three primary reasons that stem from this structure: (i) its sliding window $[t-L, t]$ aligns with finite microbial memory in real bioprocesses, avoiding dependence on the entire past history assumptions that can lead to non-physical artifacts in periodic regimes; (ii) it preserves the zero-integral property over periods for periodic functions (see Lemma \ref{lem:1} in \ref{app:LIC1}), ensuring PBC compatibility without additional corrections; and (iii) it enables efficient numerical handling via pseudospectral methods, reducing computational complexity compared to full-memory operators \cite{elgindy2025sustainable}. In contrast, RL FDs require fractional initial conditions that lack clear biological interpretation, while standard Caputo FDs disrupt periodicity due to the fixed integration lower limit. For further information on the CFDS, its properties, and applications, see \cite{elgindy2025sustainable,elgindy2024fouriera,elgindy2024fourierb,elgindy2024fourier} and the references therein. 

In this paper, we focus on optimizing a FOCS for continuous biological water treatment under periodic control. The primary objective is to minimize the average output concentration of the pollutant (substrate) over a fixed period, thereby improving the quality of treated water. This optimization is subject to constraints on the average dilution rate, ensuring a consistent treatment capacity, and PBCs that reflect the cyclic nature of the control strategy. By utilizing FD dynamics, we aim to utilize the memory effects to achieve superior pollutant removal compared to steady-state controls, and more realistic representation of the system dynamics than integer-order periodic controls. 

This paper makes several significant contributions to the field of bioprocess engineering and fractional calculus: (i) We develop a novel FOCM that integrates a CFDS effect, extending the integer-order framework of \cite{bayen2020improvement}. This model captures the memory-dependent dynamics of microbial growth and substrate degradation, offering a more realistic representation of bioprocesses. (ii) In addition to the fractional order $\alpha$, which quantifies microbial memory effects, our model introduces a dynamic scaling parameter $\vartheta > 0$ that modulates the amplitude of system dynamics. This parameter appears as a multiplicative factor $\vartheta^{1-\alpha}$ in the FDEs, scaling the influence of dilution and microbial activity. While $\alpha$ captures the degree of memory, $\vartheta$ controls the system's responsiveness to control inputs. As we demonstrate, tuning $\vartheta$ significantly impacts pollutant removal efficiency, with larger values increasing dynamic response and yielding substantial reductions in average substrate concentration. (iii) We reduce the 2D FOCS to a 1D FDE using a transformation that links substrate and biomass concentrations, as derived in \cite{elgindy2025sustainable}. This reduction results in analytical simplification and computational efficiency while preserving the essential dynamics. (iv) We formulate a FOCP to minimize the average pollutant concentration under OPC, incorporating FD dynamics and practical constraints on dilution rates and treatment capacity. This extends the scope of periodic control strategies to FOCSs. (v) We establish the existence of OPSs using results from \cite{elgindy2025sustainable}, which employ Schauder's fixed-point theorem and compactness arguments. We also provide conditions for the uniqueness of solutions for specific parameter value sets to improve the robustness of the proposed approach. (vi) We conduct a comprehensive sensitivity analysis to quantify the impact of key fractional parameters (specifically the fractional order $\alpha$, the scaling parameter $\vartheta$, and the memory length $L$) on the OPC performance and the resulting average pollutant concentration. This analysis provides crucial, actionable insights for system designers, illustrating how to tune these parameters to maximize treatment efficiency while minimizing environmental impact under varying biological and operational conditions, hopefully resulting in cleaner water and healthier ecosystems.

The motivation for this work stems from the urgent need for more efficient, sustainable water treatment technologies capable of handling variable environmental conditions while minimizing ecological impact. By incorporating microbial memory effects via CFDS, we develop biologically realistic and practically implementable OPC strategies for real-world wastewater treatment facilities. This study advances the theoretical and practical understanding of OPC in FOCSs, bridging fractional-order dynamics with environmental engineering applications, and offers a pathway to improved pollutant removal and ecosystem health preservation.

The remainder of this paper is organized as follows. Section \ref{sec:Prelim} provides essential preliminaries on fractional calculus and the CFDS. Section \ref{sec:PS1} presents the problem statement and formulation of the FOCM. Section \ref{sec:SFOCM1} simplifies the FOCM to a 1D FDE, establishes the existence and uniqueness of its solutions, and analyzes the computational complexity of the reduced system. Section \ref{sec:optimal_control} derives the OC strategy using the fractional PMP. Section \ref{sec:Mem} analyzes the memory effects on control stability and orbital stability. Section \ref{sec:NTP1} presents sensitivity analysis and numerical simulations. Finally, Section \ref{eq:Conc} concludes the paper with a summary of key findings and future research directions. The appendices contain supporting theoretical results, including integral properties of the CFDS (\ref{app:LIC1}), state convexity analysis (\ref{app:convexity}), local stability of equilibria (\ref{app:LSE}), orbital stability of OPSs (\ref{sec:orbital_stability}), perturbation analysis (\ref{app:perturbation_analysis}), derivation of the right-sided CFDS (\ref{sec:DRSCFDS1}), numerical optimization techniques (\ref{app:NSMfStR1}), and the complete solution algorithm (\ref{app:algorithm}).

\section{Preliminaries}
\label{sec:Prelim}
This section provides a brief introduction to the key concepts of fractional calculus employed in this paper, with a focus on the Caputo FD and its sliding memory variant (CFDS). Fractional calculus extends traditional integer-order differentiation and integration to non-integer orders, enabling the modeling of systems with memory and hereditary properties \cite{podlubny1998fractional, kilbas2006theory}. In biological systems like chemostats, these tools capture time-dependent behaviors such as microbial adaptation delays, which integer-order models overlook.

\subsection{FDs and Integrals}
The Riemann-Liouville fractional integral of order $\alpha > 0$ for a function $f(t)$ is defined as
\[
I_a^\alpha f(t) = \frac{1}{\Gamma(\alpha)} \int_a^t (t - \tau)^{\alpha - 1} f(\tau) \, d\tau.
\]
FDs build upon this, with several definitions available (e.g., Riemann-Liouville, Caputo, Gr\"{u}nwald-Letnikov). The left-sided Caputo FD of order $\alpha \in (n-1, n)$ for $n \in \MB{N}$ is
\[
{}^{\text{C}}_a D_t^\alpha f(t) = I_a^{n-\alpha} f^{(n)}(t) = \frac{1}{\Gamma(n-\alpha)} \int_a^t (t - \tau)^{n-\alpha-1} f^{(n)}(\tau) \, d\tau.
\]
For $0 < \alpha < 1$, this simplifies to
\[
{}^{\text{C}}_a D_t^\alpha f(t) = \frac{1}{\Gamma(1-\alpha)} \int_a^t (t - \tau)^{-\alpha} f'(\tau) \, d\tau.
\]
The Caputo derivative is particularly suited for initial value problems, as it incorporates initial conditions in terms of integer-order derivatives, aligning with physical interpretations like the initial substrate concentrations in bioprocesses. Unlike integer-order derivatives, which are local, Caputo FDs are nonlocal operators that depend on the entire history from $a$ to $t$, modeling hereditary phenomena in microbial kinetics \cite{magin2004fractional}.

\subsection{CFDS}
\label{subsec:CFDS1114Oct}
Traditional Caputo FDs use a fixed lower limit $a$ (often 0), implying dependence on the entire past history from the initial time. In periodic bioprocesses with cyclic operations, this can conflict with periodicity and bounded historical dependence such as the microbial responses influenced only by recent environmental changes over a finite window $L$. To address this, we employ the CFDS, a finite-memory variant introduced in \cite{bourafa2021periodic}, and modified later by \citet{elgindy2024fouriera,elgindy2024fourierb} for improved numerical stability. The left-sided CFDS is
\[
{}^{\text{MC}}_{L} D_t^\alpha f(t) = \frac{1}{\Gamma(1-\alpha)} \int_{t-L}^t (t - \tau)^{-\alpha} f'(\tau) \, d\tau,
\]
where $L > 0$ is the sliding memory length (a bounded horizon $[t-L, t]$). This preserves the Caputo kernel but localizes memory to a finite sliding window $[t-L, t]$, making it ideal for modeling systems with hereditary effects over bounded horizons, such as slow microbial adaptation or persistent finite influences in biological tissues and physiological processes \cite{du2013measuring}. In the context of wastewater treatment, this finite memory captures nutrient or pollutant history over practical timescales (like hours of inflow variability resetting microbial responses), avoiding the unphysical full history recall of classical FDs while aligning with observed non-local behaviors in bioprocesses \cite{ionescu2017role}.

The CFDS offers several advantages over standard Caputo or Riemann-Liouville FDs: (i) its sliding window $[t-L, t]$ ensures that for $f \in AC_T$, the integral of the CFDS over $[0,T]$ vanishes (see Lemma \ref{lem:1} in \ref{app:LIC1}), guaranteeing compatibility with the PBCs \eqref{eq:PBC1}--\eqref{eq:PBC3}; (ii) the finite window $L$ confines convolutions to bounded intervals, in contrast to the infinite memory in classical definitions, facilitating efficient pseudospectral discretization and reducing computational complexity \cite{elgindy2024fouriera,elgindy2024fourierb}; (iii) it is more biologically plausible, as the sliding memory mechanism localizes hereditary effects (i.e. captures microbial memory over recent time), thereby preventing the unphysical dependence on the entire distant history that characterizes standard Caputo/RL FDs with fixed lower limits \cite{du2013measuring, ionescu2017role}. For further details on the properties of the CFDS, see \cite{bourafa2021periodic,elgindy2025sustainable} and the references therein.

\section{Problem Statement}
\label{sec:PS1}
In this paper, we address the optimization of a chemostat model for continuous biological water treatment, where we focus on minimizing the average output concentration of pollution under periodic control strategies. Our primary goal is to minimize the average output concentration of the pollutant (substrate), denoted $s(t)$, over a fixed period $T$. This translates to the practical goal of reducing pollutant levels in the effluent of a water treatment process. The total amount of water treated during the period $T$ is required to have an average removal rate $\bD$, calculated by dividing the total treated volume $\bar{Q}$ by the product of the chemostat volume $V$ and the period $T$, i.e., $\bD = \bar{Q}/(V T)$. This constraint ensures a consistent treatment capacity by explicitly quantifying the total treated volume, modeling practical limitations such as reactor volume or pump rates in wastewater facilities, thereby ensuring operational feasibility. The FOCP is formulated as follows:

\begin{subequations}
\begin{equation}\label{eq:1a}
\min_{D} J(D),
\end{equation}
subject to the integral constraint on the control variable $D(t)$, the dilution rate:
\begin{equation}\label{eq:IntConsD1}
D\av = \bD,
\end{equation}
the state and control bounds 
\begin{gather}
0 \le s(t) \le \Sin,\label{eq:Boundd1}\\
x(t) > 0,\label{eq:Boundd2}\\
D_{\min} \le D(t) \le D_{\max},\label{eq:Boundd3}
\end{gather}
and the following 2D FDEs expressing the system dynamics:
\begin{align}
{}^{\text{MC}}_{L}D_t^\alpha s(t) &= \vartheta^{1-\alpha} \left[-\frac{1}{Y} \mu(s(t), x(t)) x(t) + D(t) (s_{in} - s(t))\right],\label{eq:sysdyn1}\\
{}^{\text{MC}}_{L}D_t^\alpha x(t) &= \vartheta^{1-\alpha}\,[\mu(s(t), x(t)) - D(t)]\,x(t),\label{eq:sysdyn2}
\end{align}
with PBCs:
\begin{align}
s(t) &= s(t+T),\quad \forall t \in [0, \infty),\label{eq:PBC1}\\
x(t) &= x(t+T),\quad \forall t \in [0, \infty),\label{eq:PBC2}\\
D(t) &= D(t+T),\quad \forall t \in [0, \infty).\label{eq:PBC3}
\end{align}
\end{subequations}
In the above FOCP, $s(t)$ represents the substrate concentration, $x(t)$ the biomass concentration, $D(t)$ the time-varying dilution rate (control input) with minimum and maximum values $D_{\min}$ and $D_{\max}$, respectively, $J(D) = s\av$ is the objective functional, which represents the average substrate concentration over the period $T$, $s_{in}$ the inlet substrate concentration, $Y > 0$ the yield coefficient, $\mu(s(t), x(t))$ the specific growth rate of the microorganisms, and $\vartheta > 0$ is a dynamic scaling parameter that controls the magnitude of the system dynamics and ensures dimensional consistency in the fractional-order equations. The specific growth rate $\mu$ is assumed to follow the Contois growth model given by:
\begin{equation}\label{eq:muContois1}
\mu(s, x) = \frac{\mu_{\max} s}{K x + s},    
\end{equation}
where $\mu_{\max} > 0$ is the maximum growth rate, and $K > 0$ is the saturation constant. The fractional dynamics are modeled using the CFDS. The sliding memory window $[t-L, t]$ introduces a finite memory effect, which captures the influence of past states on current dynamics. This formulation extends the integer-order chemostat model studied in \cite{bayen2020improvement} by accounting for memory effects, which can lead to a more realistic representation of microbial growth and substrate degradation. The challenge lies in determining the OPC $D^*$ that minimizes the average substrate concentration while satisfying the treatment constraint and periodic conditions in this fractional-order context. 

\subsection{Biological Interpretation of CFDS and Fractional Parameters}
\label{subsec:Biommmn1}
The CFDS is biologically motivated, not merely mathematical. In real FOCSs:
\begin{itemize}
    \item The microbial memory is finite and local \cite{gokhale2021memory,faigenbaum2025uncovering}. In fact, bacteria respond to nutrient and stress history over hours to days, not since reactor startup. The CFDS window $[t-L,t]$ in ${}^{\text{MC}}_{L} D_t^\alpha$ enforces this bounded memory, avoiding unphysical infinite recall.
    \item Fractional order $\alpha$ quantifies memory strength \cite{du2013measuring,khalighi2022quantifying,amirian2022extending}:
    \begin{itemize}
        \item $\alpha \to 1$: Weak memory (short-term memory, fast-growing strains).
        \item $\alpha \to 0$: Strong memory (persistent historical dependence like biofilm populations \cite{rumbaugh2020biofilm}).
    \end{itemize}
    Lower $\alpha$ increases the weight of past states in Eqs. \eqref{eq:sysdyn1} and \eqref{eq:sysdyn2}, requiring more dynamic OPC to overcome inertia, as we demonstrate later in Section \ref{sec:NTP1}.
    \item Memory length $L$ reflects ecological timescale; for example, $L$ may equal the hydraulic residence time \cite{raboni2013influence}.
    \item Scaling parameter $\vartheta$ controls responsiveness: Larger $\vartheta$ amplifies dilution and growth terms. Tuning $\vartheta$ enables strain-specific OPC like, for example, selecting high-$\vartheta$ strains for unstable inflows.
\end{itemize}

This framework predicts how memory-aware OPC outperforms steady-state by aligning control with microbial timescales, enabling smarter bioprocess engineering.

\section{Simplification of the FOCM}
\label{sec:SFOCM1}
To facilitate analysis and computation, we can reduce the FOCM, characterized by the 2D FDEs \eqref{eq:sysdyn1}--\eqref{eq:sysdyn2}, to a 1D FDE. As shown in \cite[Section 2.2]{elgindy2025sustainable}, the 2D FOCS can be transformed into a 1D FDE using a transformation that uses PBCs and the properties of the CFDS. In particular, the transformation 
\begin{equation}\label{eq:Transs1}
z(t) = Y (\Sin - s(t)) - x(t),
\end{equation}
applied to the FOCS \eqref{eq:sysdyn1}--\eqref{eq:sysdyn2} results in the FDE:
\begin{equation}\label{eq:trans2}
{}_L^{\text{MC}} D_t^\alpha z(t) = -\vartheta^{1-\alpha} D(t) z(t),
\end{equation}
with PBC $z(t) = z(t+T)$, which follows from $s(t) = s(t+T)$ and $x(t) = x(t+T)$. Using an energy dissipation argument, it can be shown that the FDE \eqref{eq:trans2} admits no nontrivial periodic solutions under the specified dynamics and boundary conditions. Consequently, we have $z(t) \equiv 0$, and hence
\begin{equation}\label{eq:biomass2}
x(t) = Y (\Sin - s(t)).
\end{equation}
Using the relation \eqref{eq:biomass2}, the FDEs \eqref{eq:sysdyn1}--\eqref{eq:sysdyn2} are reduced to the following 1D FDE:
\begin{equation}\label{eq:reducedFDE}
{}_L^{\text{MC}} D_t^\alpha s(t) = \C{F}(t, s(t)),
\end{equation}
subject to the PBC \eqref{eq:PBC1}, where 
\begin{equation}\label{eq:RHS1sds}
\C{F}(t, s(t)) = \vartheta^{1-\alpha} [D(t) - \nu(s(t))] (\Sin - s(t)), 
\end{equation}
and 
\begin{equation}\label{eq:nu1}
\nu(s(t)) = \mu(s(t), Y (\Sin - s(t))) = \frac{\mu_{\max} s(t)}{K Y (\Sin - s(t)) + s(t)}.
\end{equation}
Here, $\nu(s(t))$ represents ``the substrate-dependent specific growth rate.'' This FDE governs the substrate concentration $s$, with $D$ as the control input, enabling optimization of the objective \eqref{eq:1a} under constraints \eqref{eq:IntConsD1}, \eqref{eq:Boundd1}, \eqref{eq:Boundd3}, \eqref{eq:PBC1}, and \eqref{eq:PBC3}. We refer to this reduced FOCP by the RFOCP.

For a constant dilution rate $D \equiv \bD$ and constant substrate concentration $s \equiv \bs$, the nontrivial equilibrium solution of \eqref{eq:reducedFDE} is given by \cite[Section 2.3]{elgindy2025sustainable}:
\begin{equation}\label{eq:equil2}
\bs = \frac{\bD K Y \Sin}{\bD K Y + \mu_{\max} - \bD},
\end{equation}
provided $\bD < \mu_{\max}$, ensuring $\bs < \Sin$ and a positive biomass concentration via Eq. \eqref{eq:biomass2}. This equilibrium satisfies $\nu(\bs) = \bD$.

\subsection{Computational Complexity Analysis}
\label{subsec:complexity}
The computational complexity of solving the reduced FOCS \eqref{eq:reducedFDE} is primarily governed by the evaluation of the CFDS. As established in our previous work \cite{elgindy2024fouriera}, the CFDS operator ${}^{\text{MC}}_{L}D_t^\alpha s(t)$ introduces computational challenges due to its non-local nature, requiring integration over the sliding memory window $[t-L, t]$ at each time step.

For a numerical solution discretized over $N$ time steps with step size $\Delta t = T/N$, the memory length $L$ covers $M = \lfloor L/\Delta t \rfloor = \lfloor LN/T \rfloor$ previous steps. For any general time step $t_i$ with $i \ge M$, a naive implementation evaluating the CFDS via direct quadrature summation requires $\C{O}(M)$ operations per time step, as it must integrate over the entire memory window $[t_i-L, t_i]$. Over $N$ total time steps, this results in $\C{O}(N M) = \C{O}(N^2)$ computational complexity in the worst case, when $L$ is comparable to the period $T$. However, our FG-PS method \cite{elgindy2024fouriera} significantly reduces this complexity through several key innovations:
\begin{enumerate}[label=(\roman*)]
\item The CFDS operator ${}^{\text{MC}}_{L}D_t^\alpha s(t)$ is discretized using the $\alpha$-th order FG-PS integration matrix (FGPSIM) developed in \cite{elgindy2024fouriera}, which serves as a discrete fractional differentiation operator. Due to the convolutional nature of the sliding-memory kernel (Theorem 4.1 in \cite{elgindy2024fouriera}), this matrix exhibits a Toeplitz structure that reduces storage from $\C{O}(N^2)$ to $\C{O}(N)$ and, combined with the periodic nature of the problem, enables matrix–vector products in $\C{O}(N \log N)$ time via FFT-based convolution. This significantly improves overall scalability for large-scale and real-time bioprocess optimization.
	\item Unlike classical FDs with infinite memory, the CFDS employs a finite memory length $L$, bounding the effective historical dependence and preventing unbounded computational growth.
    \item The FG-PS method exhibits spectral convergence (Theorem 5.2 in \cite{elgindy2024fourier}), meaning that moderate values of $N$ (typically 300--400 in our simulations) suffice for engineering accuracy, keeping computational requirements manageable.
\end{enumerate}
The overall computational complexity of solving the reduced 1D system is dominated by the FG-PS discretization, which requires $\C{O}(N \log N)$ operations for the FD evaluations, plus the cost of solving the resulting NLP problem.

The reduction from the original 2D system \eqref{eq:sysdyn1}--\eqref{eq:sysdyn2} to the 1D formulation \eqref{eq:reducedFDE} provides substantial computational advantages by reducing the number of state variables from two ($s(t)$ and $x(t)$) to one ($s(t)$ only). This simplification decreases both the dimensionality of the discretized system and the complexity of the resulting NLP problem, leading to faster computation times and reduced memory requirements compared to solving the full coupled system.

\subsection{Existence of Solutions to the RFOCP}
\label{subsec:existence}
Define 
\begin{equation}
X = \{s \in AC_T \mid 0 \leq s(t) \leq \Sin\}.
\end{equation}
Then $X$ is a compact and convex subset of $AC_T$ representing the set of feasible substrate concentrations. We also take the admissible control set to be:
\begin{align}\label{eq:OCfset1}
\C{D} = \{D \in L^\infty([0, T]) |\, & D\text{ satisfies } \eqref{eq:IntConsD1}, \eqref{eq:Boundd3},\text{ and }\eqref{eq:PBC3}\}.
\end{align}
The following theorem uses results from \cite{elgindy2025sustainable} to establish the existence of an OPC $D^*$ and its corresponding state $s^*$ for the RFOCP.

\begin{theorem}[Existence of OPC]
\label{thm:existence}
Suppose that $s(0) \in (0, \Sin)$, $x(0) > 0$, and $D(t) < \mu_{\max}$ for all $t \geq 0$. Then, the RFOCP admits at least one OPS $(s^*, D^*) \in X \times \C{D}$ satisfying $s^* < \Sin$.
\end{theorem}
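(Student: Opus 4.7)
The plan is to apply the direct method of the calculus of variations: first invoke the fixed-point machinery from \cite{elgindy2025sustainable} to guarantee that each admissible control produces a well-defined periodic state, then extract a convergent minimizing sequence and identify the limit as an optimal pair.

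First, for each $D \in \C{D}$ I would use the results cited from \cite{elgindy2025sustainable} to define the solution map $\Phi : \C{D} \to X$ that sends $D$ to the $T$-periodic substrate trajectory $s_D$ of \eqref{eq:reducedFDE}. The key ingredients are the compactness and convexity of $X$, the continuity of the inverse CFDS operator acting on the right-hand side $\C{F}(t,s)$, and an invariance argument showing that the vector field points inward at the boundary of $[0,\Sin]$; Schauder's fixed-point theorem then produces a periodic solution in $X$. The hypothesis $D(t) < \mu_{\max}$ together with the strict initial inequalities $s(0) \in (0,\Sin)$ and $x(0) > 0$ rules out the washout fixed point $s \equiv \Sin$, so $s_D(t) < \Sin$ pointwise, matching the strict bound asserted in the conclusion.

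Second, I would extract a minimizing sequence $\{D_n\} \subset \C{D}$ with $J(D_n) \to \inf_{\C{D}} J$. Since $\C{D}$ is bounded in $L^\infty([0,T])$, Banach--Alaoglu yields a subsequence (not relabeled) such that $D_n \rightharpoonup^{\ast} D^{\ast}$ in $L^\infty$. The set $\C{D}$ is weak-$\ast$ closed because each of its defining conditions---the pointwise bounds $D_{\min} \leq D \leq D_{\max}$, $T$-periodicity, and the affine mean constraint $D\av = \bD$---is preserved under weak-$\ast$ limits. The associated states $s_n = \Phi(D_n)$ lie in the compact set $X$, so, passing to a further subsequence, $s_n \to s^{\ast}$ in $AC_T$, hence uniformly and in $L^1$.

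The main obstacle will be passing to the limit in the nonlinear FDE to identify $s^{\ast} = \Phi(D^{\ast})$. The critical term is the product $D_n(\cdot)(\Sin - s_n(\cdot))$, which pairs a weak-$\ast$ convergent factor with a strongly convergent one and therefore converges in the sense of distributions. The Contois-type nonlinearity $\nu(s_n)(\Sin - s_n)$ is more benign: the strict bound $s^{\ast} < \Sin$ keeps the denominator in \eqref{eq:nu1} uniformly away from zero on the limit trajectory, so continuity of $\nu$ and uniform convergence of $s_n$ suffice. I would test the FDE against smooth $T$-periodic functions and shift the CFDS onto the test function via its right-sided adjoint ${}^{\text{MC}}_{L+}D_t^\alpha$ (from the notation table), which lets the limit pass through both sides and yields $s^{\ast} = \Phi(D^{\ast})$. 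Finally, the functional $J(D) = \frac{1}{T}\int_0^T s_D\,dt$ is continuous under uniform convergence of the states, so $J(D^{\ast}) = \lim_n J(D_n) = \inf_{\C{D}} J$, establishing existence of the OPS $(s^{\ast},D^{\ast}) \in X \times \C{D}$ with $s^{\ast} < \Sin$.
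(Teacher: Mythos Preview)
Your proposal is correct and follows essentially the same route as the paper: both invoke the existence and continuity results from \cite{elgindy2025sustainable} for the control-to-state map, establish weak-$\ast$ compactness of $\C{D}$ via Banach--Alaoglu together with weak-$\ast$ closedness of the constraints, extract a minimizing sequence, and pass to the limit in the state equation. The only minor difference is that the paper handles the limit passage by appealing directly to the Volterra integral formulation in \cite[Lemma~2.3 and Eq.~(22)]{elgindy2025sustainable}, whereas you sketch a distributional argument pairing weak-$\ast$ convergence of $D_n$ with strong convergence of $s_n$; both reach the same conclusion.
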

\begin{proof}
Notice first that the set $\C{D}$ is non-empty, since $D \equiv \bD \in L^\infty([0, T])$ satisfies \eqref{eq:IntConsD1}, \eqref{eq:Boundd3}, and \eqref{eq:PBC3}. Theorem 2.2 in \cite{elgindy2025sustainable} guarantees the existence of a nontrivial, $T$-periodic Carath\'{e}odory solution to the FDE \eqref{eq:reducedFDE} with $s < \Sin$, for any admissible control $D \in \C{D}$. This ensures that the dynamics are well-defined. Since $s$ is absolutely continuous and the control-to-state mapping $\C{T}: \C{D} \to X$ is continuous (by \cite[Lemma 2.3]{elgindy2025sustainable}), the objective functional $J(D)$ is continuous. Notice also that $\C{D}$ is norm-bounded in $L^\infty([0, T])$ by \eqref{eq:Boundd3} and weakly-$\ast$ closed, since the integral constraint \eqref{eq:IntConsD1} is weakly-$\ast$ continuous, and the uniform bounds \eqref{eq:Boundd3} and periodicity \eqref{eq:PBC3} are preserved under weak-$\ast$ convergence. Thus, by the Banach-Alaoglu theorem, $\C{D}$ is weakly-$\ast$ compact \cite{clarke2013functional}. Moreover, for any sequence of controls $\{D_n\} \in \C{D}$ converging weakly-$\ast$ to $D$, the corresponding solutions $s_n \to s \in X$ (by compactness of $X$). By \cite[Lemma 2.3]{elgindy2025sustainable}, $s$ solves \eqref{eq:reducedFDE} for $D$, as the solution of the Volterra integral equation \cite[Eq. (22)]{elgindy2025sustainable} converges to the solution of the reduced fractional chemostat equation \eqref{eq:reducedFDE}. The weak-$\ast$ compactness of $\C{D}$, continuity of $J$, and compactness of $X$ ensure the infimum of \eqref{eq:1a} is attained at $(s^*, D^*)$ \cite{ahmed2021optimal}.
\end{proof}

\begin{remark}
The condition $D(t) < \mu_{\max}$ in Theorem \ref{thm:existence} ensures the dilution rate does not exceed the maximum growth rate, preventing washout, where biomass is flushed out faster than it grows.
\end{remark}

Having established the existence of OPCs, we now prove the possible existence of non-constant OPCs for the RFOCP. To prove Theorem \ref{thm:nonconstant_existence} below, we introduce the assumption below.

\begin{assumption}\label{Assump:1}
For sufficiently small $\varepsilon$, there exists a perturbed control $D_\varepsilon(t) = \bD + \varepsilon v(t)$, where $v(t)$ is $T$-periodic with $v\av = 0$, ensuring $D\epsav = \bD$, with the corresponding state having the form $\seps(t) = \bs + \varepsilon z(t)$, with $z(t)$ $T$-periodic.
\end{assumption}

Notice that, due to the well-posedness of the FDE and the continuity of the control-to-state mapping, Assumption \ref{Assump:1} is always valid under the conditions of the RFOCP, provided $\varepsilon$ is sufficiently small, as established in \cite{elgindy2025sustainable}.

While Theorem \ref{thm:existence} guarantees the existence of an OPS pair $(s^*, D^*)$ for the RFOCP, it does not determine whether the OPC $D^*$ must be constant or if non-constant solutions are possible. A natural question arises: Are all possible OPCs necessarily constant, or can non-constant controls yield better performance? Theorem \ref{thm:nonconstant_existence} addresses this critical gap by proving that, under specific conditions, non-constant OPCs may indeed exist and can achieve superior performance compared to steady-state solutions. This result underscores the potential advantages of periodic control strategies, particularly when accounting for memory effects and dynamic scaling in fractional-order systems.

\begin{theorem}[Possible Existence of Non-Constant OPCs]
\label{thm:nonconstant_existence}
Let $KY \neq 1, \alpha \in (0,1)$, and suppose that the conditions of Theorem \ref{thm:existence} are satisfied. Then, the RFOCP may admit an OPS $(s^*, D^*) \in X \times \C{D}$, where $D^*$ is non-constant, and the corresponding non-constant state $s^*$ satisfies $s^*\av < \bs$, with the potential to improve upon the steady-state average substrate concentration.
\end{theorem}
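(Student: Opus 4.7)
The strategy is to show that the steady-state control $D \equiv \bD$ is not optimal, which, combined with the fact that $\bD$ is the only admissible constant member of $\C{D}$ under the average constraint \eqref{eq:IntConsD1}, will force the optimizer $D^*$ from Theorem \ref{thm:existence} to be non-constant. Invoking Assumption \ref{Assump:1}, I take $D_\varepsilon = \bD + \varepsilon v$ with $v$ being $T$-periodic and $v\av = 0$, together with the two-term expansion $\seps = \bs + \varepsilon z_1 + \varepsilon^2 z_2 + O(\varepsilon^3)$ of the corresponding $T$-periodic state (justified for small $\varepsilon$ by the well-posedness results in \cite{elgindy2025sustainable}). The goal then reduces to exhibiting a non-constant $v$ for which $s\epsav < \bs$, so that $J(D_\varepsilon) < J(\bD)$.

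Substituting the expansion into \eqref{eq:reducedFDE}--\eqref{eq:nu1}, using $\nu(\bs) = \bD$, and matching powers of $\varepsilon$ gives, at first order, the linear FDE
\begin{equation*}
{}_L^{\text{MC}} D_t^\alpha z_1 + \vartheta^{1-\alpha}\nu'(\bs)(\Sin-\bs)\, z_1 = \vartheta^{1-\alpha}(\Sin-\bs)\, v,
\end{equation*}
and, at second order, an analogous linear equation for $z_2$ forced by $\vartheta^{1-\alpha}\left[-v z_1 + \nu'(\bs) z_1^2 - \tfrac{1}{2} \nu''(\bs)(\Sin-\bs) z_1^2\right]$. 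Averaging both equations over one period and invoking the fact---a direct Fubini calculation on the CFDS kernel---that the time average of the CFDS of any $T$-periodic absolutely continuous function vanishes, the first-order relation forces $(z_1)\av = 0$ (since $\nu'(\bs)(\Sin-\bs)>0$ and $v\av = 0$); hence the leading correction to $s\av$ enters at order $\varepsilon^2$:
\begin{equation*}
s\epsav - \bs = \varepsilon^2 (z_2)\av + O(\varepsilon^3), \quad (z_2)\av = -\frac{(v z_1)\av}{\nu'(\bs)(\Sin-\bs)} + \frac{(z_1^2)\av}{\Sin-\bs} - \frac{\nu''(\bs)}{2\nu'(\bs)}(z_1^2)\av.
\end{equation*}
A direct differentiation of \eqref{eq:nu1} gives $\nu''(\bs) = 2\mu_{\max}KY\Sin(KY-1)/[KY\Sin + (1-KY)\bs]^3$, so $\nu''(\bs)$ has the sign of $(KY-1)$, and the hypothesis $KY \neq 1$ guarantees that the curvature contribution is nondegenerate.

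The main obstacle is to select a specific non-constant $v$ for which $(z_2)\av$ is strictly negative. The plan is to test with a single harmonic $v(t) = \cos(2\pi t/T)$ and exploit the CFDS frequency response $H(\omega) = \frac{i\omega}{\Gamma(1-\alpha)}\int_0^L u^{-\alpha} e^{-i\omega u}\,du$ to compute $(z_1^2)\av$ and $(v z_1)\av$ explicitly via Parseval's identity. Multiplying the first-order FDE by $z_1$, averaging, and invoking an Alikhanov-type inequality $z_1 \cdot {}_L^{\text{MC}} D_t^\alpha z_1 \ge \tfrac{1}{2}\,{}_L^{\text{MC}} D_t^\alpha(z_1^2)$ (whose right-hand side averages to zero by $T$-periodicity) yields the one-sided estimate $(v z_1)\av \ge \nu'(\bs)(z_1^2)\av$, and substitution into the expression above produces the key bound $(z_2)\av \le -\frac{\nu''(\bs)}{2\nu'(\bs)}(z_1^2)\av$. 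Hence, whenever $KY > 1$ (so $\nu''(\bs) > 0$), the perturbation $D_\varepsilon$ strictly lowers the objective; the complementary regime $KY < 1$ is handled by estimating the cross-correlation $(v z_1)\av$ directly from the explicit frequency response and tuning the free parameters $(\vartheta, L, T)$ and the harmonic frequency so that the negative cross term dominates the curvature term of opposite sign, again forcing $(z_2)\av < 0$. In either case $s\epsav < \bs$ for all sufficiently small $\varepsilon > 0$, and since $\bD$ is the unique constant element of $\C{D}$, the optimizer $D^*$ produced by Theorem \ref{thm:existence} cannot coincide with $\bD$: it is non-constant and the associated state satisfies $s^*\av \le s\epsav < \bs$, as claimed.
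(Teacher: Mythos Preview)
Your route is genuinely different from the paper's and, in the convex case, actually sharper. The paper never carries out a two-term expansion; it argues qualitatively via a shooting map $\theta(s_0,\varepsilon)$ and local stability (Theorem~\ref{thm:local_stability}) to produce a periodic $s_\varepsilon$, then invokes Jensen's inequality on $\nu$ together with the Appendix~\ref{app:perturbation_analysis} relation $[\nu(\seps)]\av \lessgtr \bD$ to conclude only that the sign condition $s\epsav<\bs$ is \emph{consistent} with the inequalities obtained---hence the deliberately weak ``may'' in the statement and the Remark that improvement is ``not guaranteed'' for $\alpha\in(0,1)$. Your explicit computation of $(z_2)\av$ together with the Alikhanov-type estimate yields, for $KY>1$, the clean bound $(z_2)\av\le -\tfrac{\nu''(\bs)}{2\nu'(\bs)}(z_1^2)\av<0$, which is a strictly stronger conclusion than the paper draws: it shows the steady state is definitively suboptimal, not merely possibly so.

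The gap is in your treatment of $KY<1$. There the Alikhanov bound points the wrong way, and your proposed remedy---``tuning the free parameters $(\vartheta,L,T)$''---is not available: these are \emph{fixed data} of the RFOCP, not decision variables, so you cannot adjust them to force the cross-correlation term to dominate the curvature term. The only genuine freedom is the choice of the perturbation $v$ (e.g., which harmonic), and you do not actually carry out the frequency-response computation to show that some admissible $v$ makes $(z_2)\av<0$ for the given $(\vartheta,L,T)$. As written, the $KY<1$ case is therefore an assertion, not a proof. The paper sidesteps this by never claiming definiteness in either regime: it shows only that the derived inequalities do not preclude $s\epsav<\bs$, which suffices for the ``may'' conclusion. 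If you wish to keep your stronger line of argument, you must either complete the explicit harmonic calculation for $KY<1$ with the parameters held fixed, or retreat to the paper's weaker consistency claim in that regime.
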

\begin{proof}
By Theorem \ref{thm:existence}, the RFOCP admits an optimal solution $(s^*, D^*) \in X \times \C{D}$ with $s^* < \Sin$. Following \cite[Lemma 2]{bayen2020improvement}, consider a $T $-periodic, measurable function $v(t)$ that is non-zero a.e. with $v\av = 0$. Define the control $D_\varepsilon(t) = \bD + \varepsilon v(t)$, with $\varepsilon > 0$ small enough that $D_{\min} \leq D_\varepsilon(t) \leq D_{\max}$. Since $v\av = 0 $, we have $D\epsav = \bD $, so $D_\varepsilon \in \C{D}$. Define the mapping:
\[
\theta(s_0, \varepsilon) = s(T, D_\varepsilon, s_0) - s_0,
\]
where $s(t, D_\varepsilon, s_0)$ is the solution to the FDE \eqref{eq:reducedFDE} with control $D_\varepsilon$ and initial condition $s(0) = s_0$. By \cite[Lemma 2.3]{elgindy2025sustainable}, the control-to-state mapping $\C{T}: \C{D} \to X $ is continuous. Since $\C{F}$ is Lipschitz in $s$ \cite[Lemma 2.1]{elgindy2025sustainable}, the solution $s$ is continuous in $s_0$. Thus, $\theta(s_0, \varepsilon)$ is continuous in $s_0$ and $\varepsilon$. For $\varepsilon = 0 $, $D_0 \equiv \bD $, and $s \equiv \bs $, so $\theta(\bs, 0) = 0$. In accordance with Theorem \ref{thm:local_stability} (referenced in \ref{app:LSE}), the behavior of $s(t, \bD, s_0)$ can be described as follows:
\begin{itemize}
    \item If $s_0^- < \bs$, then $s(t, \bD, s_0^-)$ will increase asymptotically towards $\bs$.
    \item Conversely, if $s_0^+ > \bs$, then $s(t, \bD, s_0^+)$ will decrease asymptotically towards $\bs$.
\end{itemize}
Thus:
\[
\theta(s_0^-, 0) > 0, \quad \theta(s_0^+, 0) < 0.
\]
For small $\varepsilon$, continuity ensures $\theta(s_0^-, \varepsilon) > 0$, $\theta(s_0^+, \varepsilon) < 0$, so there exists $\bs_0 \in (s_0^-, s_0^+)$ such that $\theta(\bs_0, \varepsilon) = 0$, giving us a $T$-periodic, non-constant solution $s_\varepsilon(t)$. 

For improvement, suppose that $K Y < 1$. In this case, $\nu$ is strictly concave, so:
\begin{equation}\label{eq:23July2025}
\nu(\seps(t)) < \nu(\bs) + \nu'(\bs)(\seps(t) - \bs),\quad a.e.\,t,
\end{equation}
where
\begin{equation}\label{eq:nuprime1}
    \nu'(s) = \frac{K Y \mu_{\max} \Sin}{(K Y (\Sin - s) + s)^2}.    
\end{equation}
Taking the time average of both sides of Inequality \eqref{eq:23July2025} gives:
\begin{equation}
[\nu(\seps)]_{av} < \nu(\bs) + \nu'(\bs)(s\epsav - \bs),\quad a.e.\,t.
\end{equation}
Rearranging the Inequality:
\begin{equation}
[\nu(\seps)]_{av} - \nu(\bs) < \nu'(\bs)(s\epsav - \bs).
\end{equation}
From Assumption \ref{Assump:1} and Eq. \eqref{eq:ssvegdflkl1} in \ref{app:perturbation_analysis}, we have $[\nu(\seps)]_{av} < \bD = \nu(\bs)$. Therefore, $[\nu(\seps)]\av - \nu(\bs) < 0$. Since $\nu'$ is always positive, we have $\nu'(\bs) > 0$. The inequality's negative right-hand side, arising when \begin{equation}\label{eq:weyewt1}
s\epsav < \bs,
\end{equation}
confirms that non-constant OPCs can reduce the average substrate concentration below the steady-state level. To support this claim further, notice by Jensen's inequality that
\begin{equation}
\nu(s\epsav) > [\nu(\seps)]\av.
\end{equation}
However, $[\nu(\seps)]_{av} < \bD = \nu(\bs)$, by Eq. \eqref{eq:ssvegdflkl1}, so
\begin{equation}
\nu(s\epsav) > [\nu(\seps)]\av < \nu(\bs).
\end{equation}
This suggests that $\nu(s\epsav) < \nu(\bs) \Leftrightarrow s\epsav < \bs$ may take place for some non-constant, $T$-periodic states, but it is not guaranteed for all. 

Now, suppose that $K Y > 1$. In this case, $\nu$ is strictly convex, so:
\begin{equation}
\nu(\seps(t)) > \nu(\bs) + \nu'(\bs)(\seps(t) - \bs),\quad a.e.\,t.
\end{equation}
Take the time average of both sides:
\begin{equation}
[\nu(\seps)]_{av} > \nu(\bs) + \nu'(\bs)(s\epsav - \bs),\quad a.e.\,t.
\end{equation}
Rearranging the Inequality:
\begin{equation}
[\nu(\seps)]_{av} - \nu(\bs) > \nu'(\bs)(s\epsav - \bs).
\end{equation}
From Assumption \ref{Assump:1} and Eq. \eqref{eq:dskvbwk11121} in \ref{app:perturbation_analysis}, we have $[\nu(\seps)]_{av} > \bD = \nu(\bs)$. Therefore, $[\nu(\seps)]\av - \nu(\bs) > 0$. Since $\nu'(\bs) > 0$, the fact that a negative value on the right-hand side (which occurs if $s\epsav - \bs < 0$) is consistent with the inequality means that $s\epsav - \bs < 0$ is a possible outcome, and so Eq. \eqref{eq:weyewt1} may take place for some non-constant, $T$-periodic states. By another similar argument to the former case, notice by Jensen's inequality that
\begin{equation}
\nu(s\epsav) < [\nu(\seps)]\av.
\end{equation}
However, $[\nu(\seps)]_{av} > \bD = \nu(\bs)$, by Eq. \eqref{eq:ssvegdflkl1}, so
\begin{equation}
\nu(s\epsav) < [\nu(\seps)]\av > \nu(\bs).
\end{equation}
This suggests that for certain non-constant perturbations $v(t)$, we may have $s\epsav < \bs$, though this improvement is not guaranteed for all possible perturbations. 

Suppose now that $K Y = 1$. In this case, the substrate-dependent specific growth rate is linear: $\nu(s) = \mu_{\max} s / \Sin$, with $\nu'(s) = \mu_{\max} / \Sin > 0$ and $\nu''(s) = 0$. The steady-state $\bs$ satisfies $\nu(\bs) = \bD$, so $\bs = \bD \Sin / \mu_{\max}$. Using Assumption \ref{Assump:1}, the perturbation analysis in \ref{app:perturbation_analysis} yields:
\[
[\nu(\seps)]\av = D\epsav = \bD = \nu(\bs).
\]
Thus:
\[
\frac{\mu_{\max}}{\Sin} s\epsav = \frac{\mu_{\max}}{\Sin} \bs \implies s\epsav = \bs.
\]
This shows that the average substrate concentration under small, non-constant perturbations equals the steady-state value, implying no improvement over the steady-state. 
\end{proof}

\begin{remark}
While any non-constant, admissible solution improves the performance index compared to the steady-state solution for $\alpha = 1$, as proven in \cite{bayen2020improvement}, such improvement is not guaranteed for $0 < \alpha < 1$. However, under the condition $KY \neq 1$, there may exist non-constant, admissible solutions that yield an improvement, as demonstrated by Theorem \ref{thm:nonconstant_existence}.
\end{remark}

\subsection{Positivity and Boundedness of OPSs}
\label{subsec:PB1}
The following corollary establishes the positivity and boundedness of solutions under mild conditions. Their proofs can be found in \cite[Theorem 2.1 and Corollary 2.2]{elgindy2025sustainable}. 

\begin{corollary}\label{Cor:Oct14}
Let $D(t)$ be any admissible control for all $t \ge 0$, and suppose that $s(0) \in (0, \Sin)$ and $x(0) > 0$. Then the OPSs of the RFOCP satisfy the following properties:
\begin{enumerate}[label=(\roman*)]
    \item The biomass concentration $x^*(t)$ and PSC $s^*(t)$ remain strictly positive for all $t > 0$, i.e., $x^*(t) > 0$ and $s^*(t) > 0$.
    \item The PSC $s^*(t)$ satisfies $0 < s^*(t) < \Sin$ for all $t > 0$.
\end{enumerate}
\end{corollary}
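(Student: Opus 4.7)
The plan is to treat the two claims as consequences of the well-posedness and invariance results already proved in \cite{elgindy2025sustainable} for the reduced scalar FDE \eqref{eq:reducedFDE}, combined with the algebraic identity \eqref{eq:biomass2} that was derived by setting $z \equiv 0$. Since every OPS pair $(s^*, D^*)$ automatically satisfies \eqref{eq:reducedFDE} with an admissible control $D^* \in \C{D}$, \cite[Theorem 2.1]{elgindy2025sustainable} applies directly to $s^*$ and \cite[Corollary 2.2]{elgindy2025sustainable} supplies the bounds; the work that remains is essentially to transcribe those conclusions into the notation of the present paper and to verify that the admissibility hypotheses on $D$ and on the initial data line up.

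First I would establish (ii), the two-sided bound $0 < s^*(t) < \Sin$, by a boundary-sign analysis of the right-hand side $\C{F}$ in \eqref{eq:RHS1sds}. At $s = 0$, since $\nu(0) = 0$, one has $\C{F}(t,0) = \vartheta^{1-\alpha} D(t)\,\Sin \ge 0$, while at $s = \Sin$ the factor $(\Sin - s)$ forces $\C{F}(t,\Sin) = 0$. A fractional comparison principle for the CFDS, as used in \cite[Theorem 2.1]{elgindy2025sustainable}, then shows that the set $[0,\Sin]$ is positively invariant, and because the initial datum lies strictly inside it, the lower and upper bounds are strict for all $t>0$, giving $s^*(t) \in (0,\Sin)$.

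Next I would deduce (i) from (ii) via the substrate--biomass relation \eqref{eq:biomass2}: every OPS obeys $x^*(t) = Y(\Sin - s^*(t))$ and $Y > 0$, so the strict inequality $s^*(t) < \Sin$ established in (ii) immediately gives $x^*(t) > 0$ for all $t > 0$, while the positivity of $s^*$ is already part of (ii). As a cross-check, a direct sign-preservation argument can be applied to \eqref{eq:sysdyn2}, which is linear in $x$ with coefficient $\vartheta^{1-\alpha}[\mu(s^*,x^*) - D(t)]$; \cite[Corollary 2.2]{elgindy2025sustainable} then yields the same conclusion without appealing to the reduction.

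The principal obstacle is \emph{strictness} of the inequalities: non-negativity preservation for Caputo-type equations is standard, but because the CFDS has a finite sliding-memory window $[t-L,t]$ and the control $D$ is only $L^\infty$, one must rule out degenerate trajectories that touch $\{0\}$ or $\{\Sin\}$ on a set of positive measure. This is precisely the delicate point handled by the kernel estimate behind \cite[Theorem 2.1]{elgindy2025sustainable}, so rather than reproducing the calculation I would invoke that result and verify its hypotheses (admissibility of $D^*$, initial condition in $(0,\Sin)$, and absolute continuity of $s^*$ on $[0,T]$) are all in force for OPSs of the RFOCP.
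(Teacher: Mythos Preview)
Your proposal is correct and takes essentially the same approach as the paper: the paper's entire ``proof'' of this corollary is a bare citation to \cite[Theorem 2.1 and Corollary 2.2]{elgindy2025sustainable}, and you invoke exactly those two results, merely adding a sketch of the boundary-sign/invariance argument and the algebraic step via \eqref{eq:biomass2} that presumably underlie them. If anything, your write-up is more informative than the paper's, since it spells out why the hypotheses of the cited results are met by an OPS pair $(s^*,D^*)$.
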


\subsection{Uniqueness of Solutions to the RFOCP}
\label{subsec:uniqueness}
The uniqueness of the RFOCP depends on two main factors: The uniqueness of the state solution for a given control input, and the convexity properties of the objective function and the system dynamics. In this section, we use conditions from \cite{elgindy2025sustainable} to establish uniqueness.

\begin{theorem}[Uniqueness of OPC]\label{thm:uniqueness}
Let $K Y \neq 1$, and suppose that the conditions of \cite[Theorem 2.3(ii)]{elgindy2025sustainable} hold true. Specifically: 
\begin{equation}
s(0) \leq \hs = \frac{\Sin \sqrt{K Y}}{\sqrt{K Y} + 1}, 
\end{equation}
and either $D(t) \leq \nu(\hs)$ for all $t \in [0, T]$ or $\bD \leq \nu(\hs)$. Then the optimal solution $(D^*, s^*)$ is unique. Furthermore, both $s^*$ and $D^*$ must be non-constant, and the strict convexity of $J$ ensures improved performance over the steady-state.
\end{theorem}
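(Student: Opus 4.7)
The plan is to reduce uniqueness of the optimal pair to two ingredients: uniqueness of the periodic state for each admissible control, and strict convexity of the reduced objective. First I would invoke Theorem 2.3(ii) of \cite{elgindy2025sustainable}: under the hypotheses $s(0) \leq \hs$ together with either $D(t) \leq \nu(\hs)$ pointwise or the weaker mean form $\bD \leq \nu(\hs)$, the reduced FDE \eqref{eq:reducedFDE} admits a unique $T$-periodic Carath\'{e}odory solution. This single-valuedness upgrades the control-to-state map $\C{T}:\C{D}\to X$ to a well-defined continuous function, so $\tilde{J}(D):=J(\C{T}(D))=s\av$ is a genuine function (not a set-valued map) on the convex admissible set $\C{D}$.

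Next I would establish strict convexity of $\tilde{J}$ on $\C{D}$. The pivotal structural input is the sign of $\nu''$: a short computation from \eqref{eq:nu1} shows that $\mathrm{sgn}(\nu'')=\mathrm{sgn}(KY-1)$, so $\nu$ is strictly concave on $[0,\Sin]$ when $KY<1$ and strictly convex when $KY>1$, while the excluded case $KY=1$ degenerates $\nu$ to an affine function---which is precisely why the hypothesis reads $KY\neq 1$. Differentiating the Volterra representation of \eqref{eq:reducedFDE} with respect to the control yields a sensitivity equation whose sign is controlled by the bound $D\leq\nu(\hs)$ from the reference; combined with the strict convexity/concavity of $\nu$, this propagates through integration in $t$ to strict convexity of $\tilde{J}$ on $\C{D}$. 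Together with convexity of $\C{D}$ and the existence guarantee of Theorem \ref{thm:existence}, strict convexity forces the minimizer $D^*$ to be unique, and hence $s^*=\C{T}(D^*)$ is unique as well.

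For the non-constancy claim I would argue by contradiction. Suppose $D^*\equiv c$; the mean constraint \eqref{eq:IntConsD1} forces $c=\bD$, the unique periodic state is $s^*\equiv\bs$, and $\tilde{J}(D^*)=\bs$. Under $KY\neq 1$ and the stronger hypotheses active here (which pin down the sign of the sensitivity and make the state unique), the perturbation construction of Theorem \ref{thm:nonconstant_existence} together with \ref{app:perturbation_analysis} now produces an admissible non-constant $D_\varepsilon=\bD+\varepsilon v$ whose associated state satisfies $s\epsav<\bs$; this contradicts optimality of $\bD$, so $D^*$ must be non-constant. Non-constancy of $s^*$ is then immediate, since a constant $s^*$ would give ${}_L^{\text{MC}}D_t^\alpha s^*\equiv 0$, forcing $D^*=\nu(s^*)$ to be constant as well.

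The hardest part is the convexity step. The nonlocal sliding Caputo operator interacts nontrivially with the nonlinearity $\nu$, so standard ODE-comparison arguments only transfer partially; one has to differentiate the Volterra integral form of \eqref{eq:reducedFDE}, sign the resulting sensitivity by leveraging $D\leq\nu(\hs)$ to keep the state within the monotonic regime of $\nu$, and then propagate convexity through the averaging functional. A secondary subtlety is upgrading the ``may hold'' conclusion of Theorem \ref{thm:nonconstant_existence} to a genuine strict improvement under the present hypotheses, which is precisely what the stronger state-uniqueness conditions of Theorem 2.3(ii) of \cite{elgindy2025sustainable} enable.
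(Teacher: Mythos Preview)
Your overall architecture---uniqueness of the state via \cite[Theorem 2.3(ii)]{elgindy2025sustainable}, then strict convexity of the reduced cost on the convex set $\C{D}$---is exactly the paper's strategy. The divergence is in how the two harder steps are executed.

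For strict convexity, the paper does not differentiate the Volterra form or run a sensitivity-equation argument. Instead it introduces $h(s)=\nu(s)(\Sin-s)$ and $F(s,D)=D(\Sin-s)-h(s)$, shows $\partial F/\partial s<0$ on $[0,\hs]$ (using that $h$ is strictly increasing there and that $D\leq\nu(\hs)$ confines the state to this range), and deduces injectivity of $\C{T}$. It then invokes a separate lemma (Lemma \ref{lem:state_convexity}) proving via the integral balance identity and Jensen's inequality for $\nu$ that $s_\lambda\neq\lambda s_1+(1-\lambda)s_2$ whenever $KY\neq 1$. The final step---passing from ``$\C{T}$ is injective and non-affine'' to ``$J\circ\C{T}$ is strictly convex''---is buttressed by a forward reference to the PMP analysis of Section \ref{sec:optimal_control}, which excludes singular arcs and forces bang-bang structure. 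Your sensitivity sketch is plausible in spirit but, as you acknowledge, the nonlocal CFDS kernel makes signing the second variation delicate; the paper sidesteps this by leaning on the PMP.

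The non-constancy step is where your proposal has a genuine gap. You want to contradict optimality of $D\equiv\bD$ by producing some $D_\varepsilon$ with $s\epsav<\bs$, citing Theorem \ref{thm:nonconstant_existence} and \ref{app:perturbation_analysis}. But those results only conclude that such an improvement \emph{may} occur (the chain of inequalities there never closes to $s\epsav<\bs$ unconditionally, in either the $KY<1$ or $KY>1$ case), and you merely assert that the present hypotheses upgrade ``may'' to ``does'' without a mechanism. The paper does not attempt this upgrade: it derives non-constancy directly from the PMP, showing that $\phi(t)=0$ on an interval forces $p\equiv 0$ and hence ${}^{\text{MC}}_{L+}D_t^\alpha p=-1/T\neq 0$, a contradiction, so the optimal control is bang-bang and in particular non-constant. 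You should route non-constancy through the Hamiltonian analysis rather than through the perturbation lemma.
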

\begin{proof}
\cite[Theorem 2.3(ii)]{elgindy2025sustainable} ensures a unique nontrivial, $T$-periodic, Carath\'{e}odory solution to \eqref{eq:reducedFDE}. This establishes that for any given admissible control $D$, there is a unique corresponding state trajectory $s$. Thus, the control-to-state mapping $\C{T}: \C{D} \to X$ is well-defined and single-valued. The uniqueness of the OPC $D^*$ is closely related to the convexity of the problem. The admissible control set $\C{D}$ is convex, as it is defined by linear constraints. To show that $J$ is strictly convex, consider two distinct controls $D_1, D_2 \in \C{D}$ with corresponding states $s_1 = \C{T}(D_1)$, $s_2 = \C{T}(D_2)$. Let $D_\lambda = \lambda D_1 + (1-\lambda) D_2$ for $\lambda \in (0, 1)$, with state $s_\lambda = \C{T}(D_\lambda)$. We need to prove that:
\begin{equation}
J(D_\lambda) = s_{\lambda,\text{av}} < \lambda s_{1,\text{av}} + (1-\lambda) s_{2,\text{av}},
\end{equation}
unless $D_1 = D_2$. Define 
\begin{equation}\label{eq:hofs1}
h(s) = \nu(s) (\Sin - s).
\end{equation}
\cite[Theorem 2.3]{elgindy2025sustainable} shows that $h$ is strictly increasing on $[0, \hs)$, with $h'(\hs) = 0$; moreover, since $D(t) \le \nu(\hs)$ for all $t \in [0,T]$, then $s \in [0, \hs]$, i.e., it remains in the region where $h$ is increasing. Now, define 
\begin{equation}
F(s, D) := D(t)(\Sin - s(t)) - h(s(t)),\quad \forall t \in [0, T].
\end{equation}
Since $D > 0$, we have:
\[
\frac{\partial F}{\partial s} = -D(t) - h'(s) < 0,
\]
so $F$ is strictly decreasing in $s$. This implies the map $D \mapsto s$ is injective, where each admissible control yields a unique state trajectory. The term $h(s)$ introduces nonlinearity in the dynamics. Consequently, for two distinct, admissible controls $D_1$ and $D_2$, the control
\begin{equation}\label{eq:Convexx1}
D_\lambda = \lambda D_1 + (1 - \lambda) D_2,\quad\text{for some }\lambda \in (0,1),
\end{equation}
has a corresponding state $s_\lambda$ that satisfies the nonlinear FDE \eqref{eq:reducedFDE}, and cannot be expressed as a convex combination of $s_1$ and $s_2$ by Lemma \ref{lem:state_convexity}. Thus, the control-to-state map is not affine. Since the control-to-state map $\C{T}$ is nonlinear and injective that does not preserve convex combinations:
\[
  \C{T}(\lambda D_1 + (1-\lambda) D_2) \neq \lambda \C{T}(D_1) + (1-\lambda) \C{T}(D_2),
\]
for any $D_1 \neq D_2$ and $\lambda \in (0,1)$, and the cost functional $J(D)$ is a linear operator applied to the state, the composition $J(D) = J \circ \C{T}(D)$ is strictly convex over the convex, admissible control set $\C{D}$. The OC analysis conducted in Section \ref{sec:optimal_control} reinforces this conclusion by showing that the Hamiltonian is linear in $D$ and admits no singular arcs---a hallmark of strictly convex problems---and the OPC is bang-bang. If the composition $J \circ \C{T}$ were not strictly convex, the Hamiltonian could admit non-bang-bang solutions. The exclusive bang-bang behavior thus confirms that the control-to-state map $\C{T}$ enforces ``a corner solution,'' which is typical of strictly convex optimization problems with linear controls \cite{liberzon2011calculus}. Therefore,
\[
  J(D_\lambda) < \lambda J(D_1) + (1 - \lambda) J(D_2),
\]
for any $D_1 \neq D_2$ and $\lambda \in (0,1)$. Hence, there exists a unique minimizer $D^*$ with corresponding unique optimal trajectory $s^*$. Corollary 2.3 in \cite{elgindy2025sustainable} and the equilibrium definition \eqref{eq:equil2} confirm that the state solution $s$ must be constant when the control $D$ is constant, as optimal constant controls trivially maintain steady-state conditions. However, the same corollary also shows that $s$ must be non-constant when $D$ is non-constant. By Theorem \ref{thm:nonconstant_existence}, there may exist non-constant, admissible solutions that improve upon the steady-state. However, the variability of the unique optimal periodic pair $(s^*,D^*)$ follows from the PMP analysis in Section \ref{sec:optimal_control}, which rules out singular arcs and ensures that the OC must be bang-bang.
\end{proof}

\section{OC Analysis}
\label{sec:optimal_control}
In this section, we derive the OC strategy for the RFOCP using the fractional PMP for the CFDS. For more information on the fractional PMP, readers may consult \cite{agrawal2004general,kamocki2014pontryagin}.

To derive the necessary conditions of optimality, consider the Hamiltonian of the RFOCP:
\[
H(s, p, D) = \frac{1}{T} s(t) + p \vartheta^{1-\alpha} [D(t) - \nu(s(t))] (\Sin - s(t)),
\]
where $p(t)$ is the co-state/adjoint variable. The four PMP conditions are:
\begin{enumerate}[label=(\roman*)]
    \item The system dynamics is recovered from the Hamiltonian:
    \[
    {}_L^{\text{MC}} D_t^\alpha s = \frac{\partial H}{\partial p} = \vartheta^{1-\alpha} [D(t) - \nu(s(t))] (\Sin - s(t)).
    \]
    \item The co-state variable evolves according to:
    \begin{align*}
    {{}^{\text{MC}}_{L+}D_t^\alpha p} &= -\frac{\partial H}{\partial s} = -\frac{1}{T} + p(t) \vartheta^{1-\alpha} \left[\nu'(s(t)) (\Sin - s(t))\right.\\
    &\left.+ D(t) - \nu(s(t)) \right],    
    \end{align*}
    where
    \[
    \nu'(s) = \frac{K Y \mu_{\max} \Sin}{(K Y (\Sin - s) + s)^2}.
    \]
    The use of the right-sided CFDS here reflects the backward-in-time nature of the adjoint system.
    \item For all $t \in [0, T]$, the OC $D^*(t)$ must minimize the Hamiltonian:
    \[
    D^*(t) = \arg \min_{D \in [D_{\min}, D_{\max}]} H(s(t), p(t), D(t)).
    \]
    \item The transversality condition for the co-state must hold:
    \[
    p(0) = p(T).
    \]
\end{enumerate}
Notice that the Hamiltonian is linear in $D$:
\begin{align}
H(s, p, D) &= \left[p(t) \vartheta^{1-\alpha} (\Sin - s(t))\right] D(t) \notag\\
&+ \frac{1}{T} s(t) - p(t) \vartheta^{1-\alpha} \nu(s(t)) (\Sin - s(t)),
\end{align}
so we can define the switching function as follows:
\[
\phi(t) = p(t) \vartheta^{1-\alpha} (\Sin - s(t)).
\]
Since $s(t) < \Sin$ and $\vartheta^{1-\alpha} > 0$, the Hamiltonian is minimized when:
\[
D^*(t) = \begin{cases} 
D_{\max} & \text{if } \phi(t) < 0, \\
D_{\min} & \text{if } \phi(t) > 0, \\
\text{undefined} & \text{if } \phi(t) = 0.
\end{cases}
\]
If $\phi(t) = 0$, then $p(t) = 0$ (since $\Sin - s(t) > 0$). Substituting $p(t) = 0$ into the co-state equation yields:
\[
{{}^{\text{MC}}_{L+}D_t^\alpha p} = -\frac{1}{T},
\]
which gives a contradiction, as the right-sided CFDS of a zero function cannot equal a non-zero constant. Thus, singular arcs are not possible, and the POC $D^*$ is bang-bang, switching between $D_{\min}$ and $D_{\max}$. Due to periodicity, the number of switches per period is even. The switching times are typically computed numerically due to the fractional dynamics.

\section{Memory Effects on Control Stability and Orbital Stability}
\label{sec:Mem}
The fractional order $\alpha$ significantly influences the stability characteristics of the periodic control scheme. As established in Lemma \ref{lem:etey1}, the decay rate $\lambda$ in the exponential stability $z(t) \sim e^{-\lambda t}$ depends on both $\alpha$ and the memory length $L$. Lower values of $\alpha$ (stronger memory effects) result in:
\begin{enumerate}[label=(\roman*)]
    \item More frequent control adjustments to overcome system inertia, as evidenced by the increased number of control switches in the OPS (demonstrated numerically in Section \ref{sec:NTP1}, Figure \ref{fig:Fig5}).
    \item Improved robustness to high-frequency disturbances due to the smoothing effect of historical dependence. This robustness stems from the fundamental difference between integer-order and fractional-order dynamics. In fact, for integer-order dynamics, the derivative depends only on the instantaneous rate of change with high-frequency noise directly affecting the control decisions; hence, the system may overreact to temporary fluctuations. On the other hand, a single noisy measurement in a fractional-order dynamics has a reduced impact because it is averaged with previous states. The control responds to sustained trends rather than momentary spikes. This reduced sensitivity to abrupt control changes, as the sliding memory window averages past states, enables the system to distinguish between true process changes and temporary disturbances.
\end{enumerate}

The above arguments demonstrate that FDs introduce memory effects that contribute to system stabilization. The damping properties established in Theorem \ref{thm:local_stability} for constant dilution rates provide the foundation for analyzing orbital stability under bang-bang OPC. As established in \ref{sec:orbital_stability}, the OPC $D^*$ is bang-bang, and the piecewise-constant structure enables stability analysis on each time segment where Theorem \ref{thm:local_stability} ensures local exponential decay of perturbations. 

The orbital stability framework developed in \ref{sec:orbital_stability} demonstrates that the Poincar\'e map $\C{P}$ exhibits contraction properties over each control period. Specifically, Theorem \ref{thm:jacobian-bound} establishes that for sufficiently small initial perturbation $\delta$, the mapping satisfies $|\C{P}(s^*(0) + \delta) - s^*(0)| \leq \rho\|\delta\|$ with contraction factor $\rho < 1$. This analytical result, combined with the finite number of control switches and the memory-dependent damping characterized in Lemma \ref{lem:etey1}, provides a theoretical foundation for the orbital asymptotic stability of the OPS $s^*(t)$. 

\subsection{Numerical Stability Verification}
\label{subsec:dsndfjk1}
Our comprehensive numerical simulations in Section \ref{sec:NTP1} provide empirical validation of the periodic control scheme's stability:

\begin{itemize}
    \item Figures \ref{fig:Fig1} and \ref{fig:Fig2} demonstrate the consistent convergence to periodic solutions across different discretization levels.
    \item The sensitivity analysis in Figures \ref{fig:Fig5}-\ref{fig:Fig9} shows stable system behavior across wide parameter ranges.
    \item The convergence of multiple initial guesses to either the optimal periodic solution or washout state (Figure 2 of \cite{elgindy2025sustainable}) confirms the existence of stable attractors.
    \item Figures \ref{fig:NewFigNov1} and \ref{fig:NewFigNov2} demonstrate the current numerical optimization method's robustness to initial guess variations, with perturbed initial conditions converging to the same OPS, confirming the reliability of the FG-PS discretization combined with the edge-detection correction technique.
\end{itemize}

The small residuals in Figure \ref{fig:Fig10} further validate that the numerical solution accurately satisfies the system dynamics, indicating numerical stability of the computational approach.

\subsection{Practical Stability Implications for Water Treatment}
\label{subsec:dfvkdbk123}
The demonstrated stability of the periodic control scheme has important practical implications:
\begin{enumerate}[label=(\roman*)]
    \item The system maintains stable operation despite inflow variations and measurement noise.
    \item The bang-bang control strategy remains effective across different microbial populations (characterized by different $\alpha$ values).
    \item Stable periodic solutions ensure consistent pollutant removal efficiency over time.
\end{enumerate}

This stability analysis, combined with the local equilibrium stability results in \ref{app:LSE}, provides a comprehensive understanding of the system's dynamic behavior under the proposed fractional-order periodic control strategy, ensuring its suitability for real-world water treatment applications.

\section{Sensitivity Analysis and Numerical Simulations}
\label{sec:NTP1}
This section presents a detailed sensitivity analysis of the RFOCP to elucidate the influence of its key parameters on system performance. One of the primary objectives is to quantify how the fractional order $\alpha$, the dynamic scaling parameter $\vartheta$, and the memory length $L$ affect the OPC strategy and the resulting average substrate concentration $s\av$. Understanding these relationships is crucial for translating the theoretical fractional-order framework into practical, tunable control strategies for biological water treatment. To this end, we systematically vary one parameter at a time, holding others constant at their baseline values to isolate its effect on the optimal solution.

To support our findings in this work, consider the test case of the RFOCP with the key parameters summarized in Table \ref{tab:parameters}. Pseudospectral methods have been widely applied to solve FOCPs, offering adaptability to non-integer dynamics \cite{sahabi2024fractional,yang2024pseudospectral,kheirkhah2023legendre,ejlali2017pseudospectral,
habibli2019fractional,ali2019space}. The FG-PS method used here is particularly suited for periodic problems with sliding memory, as it uses Fourier expansions for global approximation while handling discontinuities via edge-detection corrections.

We shall use this test case as a benchmark for analyzing the influence of CFDS memory effects on the performance of fractional-order periodic control strategies in biological water treatment. This test problem is particularly challenging due to the bang-bang nature of the OC, which introduces discontinuities and requires specialized edge-detection techniques for accurate resolution.

All numerical simulations were carried out using \texttt{MATLAB R2023b}, installed on a personal laptop equipped with an \texttt{AMD Ryzen 7 4800H} processor (2.9 GHz, 8 cores/16 threads), 16 GB of RAM, and running \texttt{Windows 11}. The numerical optimization was performed over the full admissible control space $\mathcal{D}$. No a priori assumption was made about the bang-bang structure of the control. Nevertheless, the optimized solutions consistently exhibited bang-bang behavior in all simulations, in alignment with the theoretical results derived from the PMP analysis in Section \ref{sec:optimal_control}. This numerical observation further validates the Hamiltonian-based conclusion that singular arcs cannot exist for the RFOCP, and the optimal control must switch between its extremal values. All numerical simulations were performed assuming the periodic boundary condition $s(0) = s(T) = \bs$ holds. This constraint ensures that the substrate concentration (i.e. the pollutant level in wastewater treatment) at the start and end of each periodic cycle matches the steady-state concentration $\bs$. Biologically, it implies that the microbial environment resets to a baseline state where the substrate-dependent specific growth rate satisfies $\nu(\bs) = \bD$, thereby balancing microbial growth and washout. This setup allows periodic variations in the dilution rate $D$ to exploit dynamic microbial responses for improved performance, particularly in terms of average pollutant level reduction.

\begin{table}[ht]
\centering
\begin{tabular}{|l|c|l|}
\hline
\textbf{Parameter} & \textbf{Value} & \textbf{Description} \\
\hline
$\Sin$     & $8\ \text{mg/L}$       & Input substrate concentration \\
$D_{\min}$          & $0.02\ \text{h}^{-1}$  & Minimum dilution rate \\
$D_{\max}$          & $1.95\ \text{h}^{-1}$  & Maximum dilution rate \\
$\mu_{\max}$        & $2\ \text{h}^{-1}$     & Maximum growth rate \\
$K$                 & $5$                   & Saturation constant \\
$Y$                 & $1$                   & Yield coefficient \\
$\bD$           & $0.5\ \text{h}^{-1}$   & Average dilution rate \\
$T$                 & $15$                  & Control period \\
$\alpha$            & $0.85$                & Fractional order \\
$L$                 & $5$                   & Sliding memory length \\
$\vartheta$         & $0.25 \text{h}$          & Dynamic Scaling Parameter \\
\hline
\end{tabular}
\caption{Parameter values used in the numerical test problem.}
\label{tab:parameters}
\end{table}

The parameter values in Table \ref{tab:parameters} are selected based on the Contois chemostat model for wastewater treatment \cite{bayen2020improvement}, with adjustments for numerical demonstration and to satisfy theoretical requirements. For example, the given $K$ and $Y$ values in the table ensures $KY > 1$, which guarantees strict convexity of $\nu(s)$ in Theorems \ref{thm:nonconstant_existence}--\ref{thm:uniqueness}. The condition $\bD < \mu_{\max}$ ensures the existence of a non-washout equilibrium, as required for positive biomass persistence in Contois models \cite{elgindy2025sustainable}. Meanwhile, $\Sin = 8$ mg/L represents a feasible inlet concentration in lab-scale studies with synthetic wastewater \cite{grady2011biological}. The dilution rate bounds $D_{\min} = 0.02$ h$^{-1}$ and $D_{\max} = 1.95$ h$^{-1}$ span operational ranges feasible in laboratory bioreactors, with $D_{\max}$ chosen specifically to satisfy $\bD < \mu_{\max}$ while approaching the maximum growth rate. The Contois saturation constant $K = 5$ (dimensionless) introduces a significant biomass inhibition effect, making the specific growth rate $\mu(s, x)$ dependent on the biomass concentration $x$. This effectively models the crowding, diffusion limitations, and intensified competition for resources that occur in high-density microbial systems, providing a more realistic representation than non-inhibitory models like Monod kinetics. The period $T = 15$ h corresponds to typical hydraulic retention cycles in lab-scale bioreactors and permits observation of multiple control switches within one cycle under bang-bang strategies \cite{bayen2020improvement,grady2011biological}. The fractional order $\alpha = 0.85$ introduces moderate memory effects while maintaining numerical stability. The memory length $L = 5$ h is chosen to capture short-term microbial adaptation timescales, motivated by the established capability of fractional calculus to capture history-dependent processes and power-law decays in biological systems. The scaling parameter $\vartheta = 0.25$ h ensures dimensional homogeneity in the fractional equations (balancing the unit of the CFDS) while modulating the system's dynamic response amplitude \cite{elgindy2025sustainable}. These parameter choices guarantee the existence of positive, bounded solutions (see Corollary \ref{Cor:Oct14}) and enable the demonstration of up to 40\% improvement in pollutant removal efficiency compared to steady-state operation, as we show later in this section.

We solved the RFOCP using the FG-PS method developed by \citet{elgindy2024fouriera,elgindy2024fourierb} for discretization, followed by the application of MATLAB's \texttt{fmincon} solver to handle the resulting constrained NLP problem. The predicted optimal state and control values at a set of $N$ equally spaced collocation points were subsequently corrected by incorporating an advanced edge-detection technique to refine the OC profile, based on the methodologies presented in \citet{elgindy2023new,elgindy2024optimal}. Finally, the corrected data were interpolated at another set of $M$ equally spaced nodes within the interval $[0, T]$. Comprehensive technical details about the employed numerical methods are available in their original introductory papers \cite{elgindy2024fouriera,elgindy2024fourierb,elgindy2023new,elgindy2024optimal}, and a brief description of our numerical approach for solving the problem is provided in \ref{app:NSMfStR1}. The complete numerical solution process is systematically outlined in \ref{app:algorithm}, which details the step-by-step methodology from system discretization through optimization to final numerical solution construction.

\begin{figure}[ht]
    \centering
    \includegraphics[width=10cm]{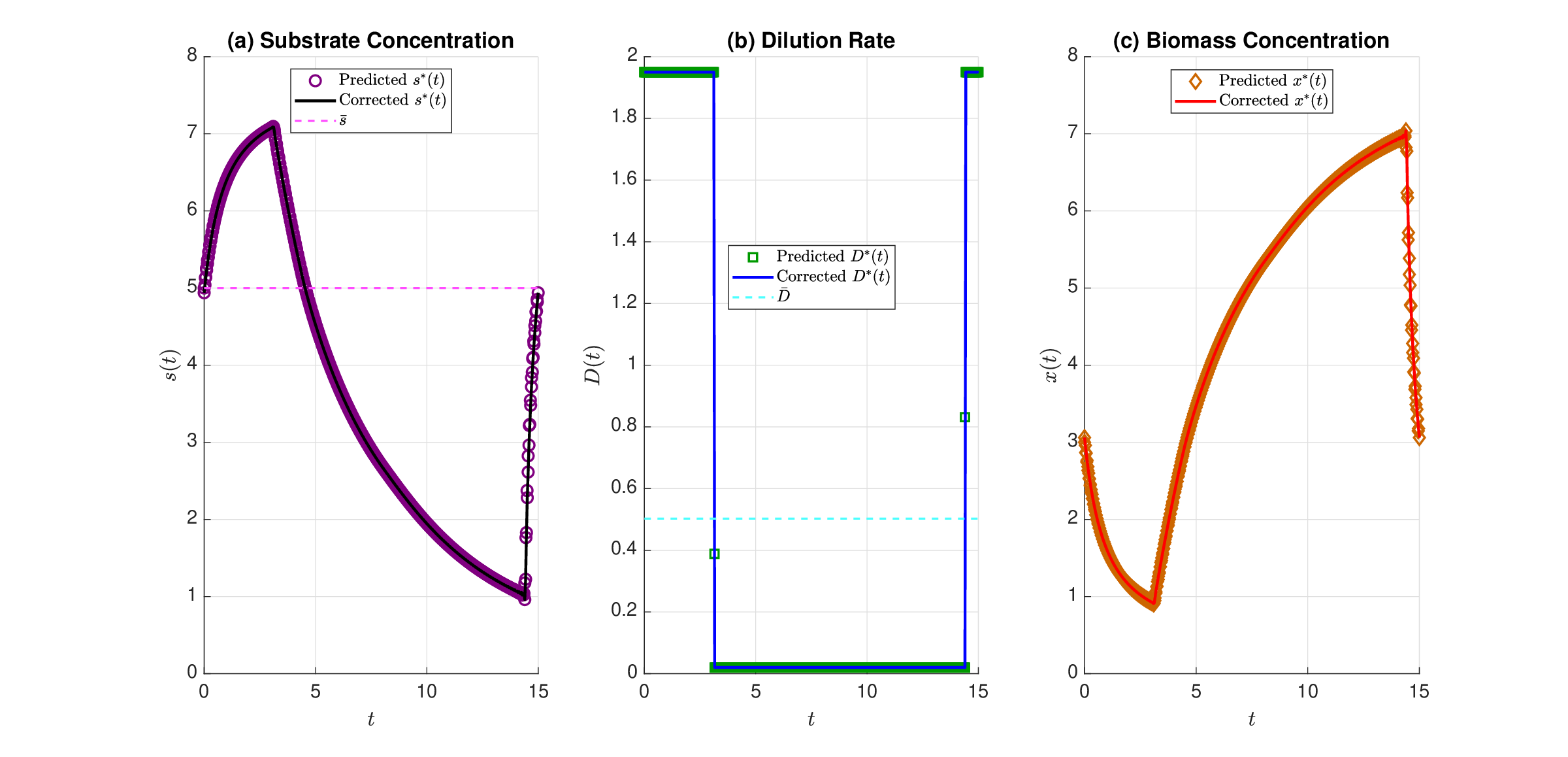}
    \caption{Time evolution (in hours) of (a) the PSC $s^*(t)$, (b) the OPC $D^*(t)$, and (c) the corresponding biomass concentration $x^*(t)$ of the RFOCP. The symbols  show the predicted solution values obtained at $N = 300$ equally-spaced collocation points from the numerical optimization, while the corrected solution (solid lines) is computed using a reconstructed bang-bang control law with $M=400$ interpolation points. Dashed lines indicate the average substrate concentration $\bs$ and average dilution rate $\bar{D}$, respectively.}
    \label{fig:Fig1}
\end{figure}

Figure \ref{fig:Fig1} illustrates the detailed time evolution of the optimal dilution rate $D^*(t)$ and the corresponding substrate and biomass concentrations, $s^*(t)$ and $x^*(t)$, respectively, over a full control period under the proposed fractional-order periodic strategy. At the onset of the cycle, $D^*(t)$ follows a bang-bang control pattern with abrupt switches between its extremal values occurring near $t = 3.131$ h and $t = 14.41$ h, rounded to four significant digits. This switching behavior induces strong fluctuations in $s^*(t)$ and $x^*(t)$. 

Initially, the high dilution rate rapidly introduces fresh substrate, causing $s^*(t)$ to rise. However, $x^*(t)$ decreases sharply because the specific growth rate under Contois kinetics, given by Eq. \eqref{eq:muContois1}, becomes temporarily too small to compensate for the elevated outflow rate. To elaborate further, despite $D_{\max} = 1.95\ \text{h}^{-1} < 2\ \text{h}^{-1} = \mu_{\max}$, the effective growth rate $\mu(s^*,x^*)$ depends on the biomass concentration. For instance, at $t = 0$, where $x^*(0) = 3\ \text{mg/L}$ and $s^*(0) = 5\ \text{mg/L}$, we find that $K x^* + s^* = 20$, yielding $\mu \approx 0.5\ \text{h}^{-1} \ll D_{\max}$. This mismatch causes the biomass to decline despite a theoretically sufficient maximum growth capacity.

As the control progresses, the dilution rate sharply decreases, limiting substrate inflow and enabling microbial consumption to reduce $s^*(t)$ to nearly $1\ \text{mg/L}$, indicating substantial substrate depletion. This phase supports efficient pollutant degradation while avoiding substrate overload. Toward the end of the cycle, the control switches back to $D_{\max}$, which helps reintroduce substrate and drives $x^*(t)$ down from its earlier peak of nearly $7\ \text{mg/L}$ to its initial value of $3\ \text{mg/L}$, thereby satisfying the periodic boundary condition. 

Importantly, the OPC strategy results in a lower average substrate concentration of $s\av \approx 3.622\ \text{mg/L}$, compared to the steady-state value $\bs = 5\ \text{mg/L}$, achieving a $27.56\%$ improvement in pollutant removal efficiency. The incorporation of memory effects through fractional-order dynamics improves the system responsiveness by accounting for past states in the evolution of substrate and biomass concentrations. This nonlocal behavior leads to more robust control outcomes, improving stability and performance over time.

Figure \ref{fig:Fig2} shows the trajectories of the approximate OPSs obtained at $N = 400$ and $M = 500$. The plots appear visually indistinguishable from Fig.~\ref{fig:Fig1}, which were generated at $N = 300$ and $M = 400$. This strong agreement between solutions at different resolutions indicates that the numerical method has converged and is accurately resolving the system dynamics, including the sharp switching behavior of the bang-bang control.

\begin{figure}[ht]
    \centering
    \includegraphics[width=10cm]{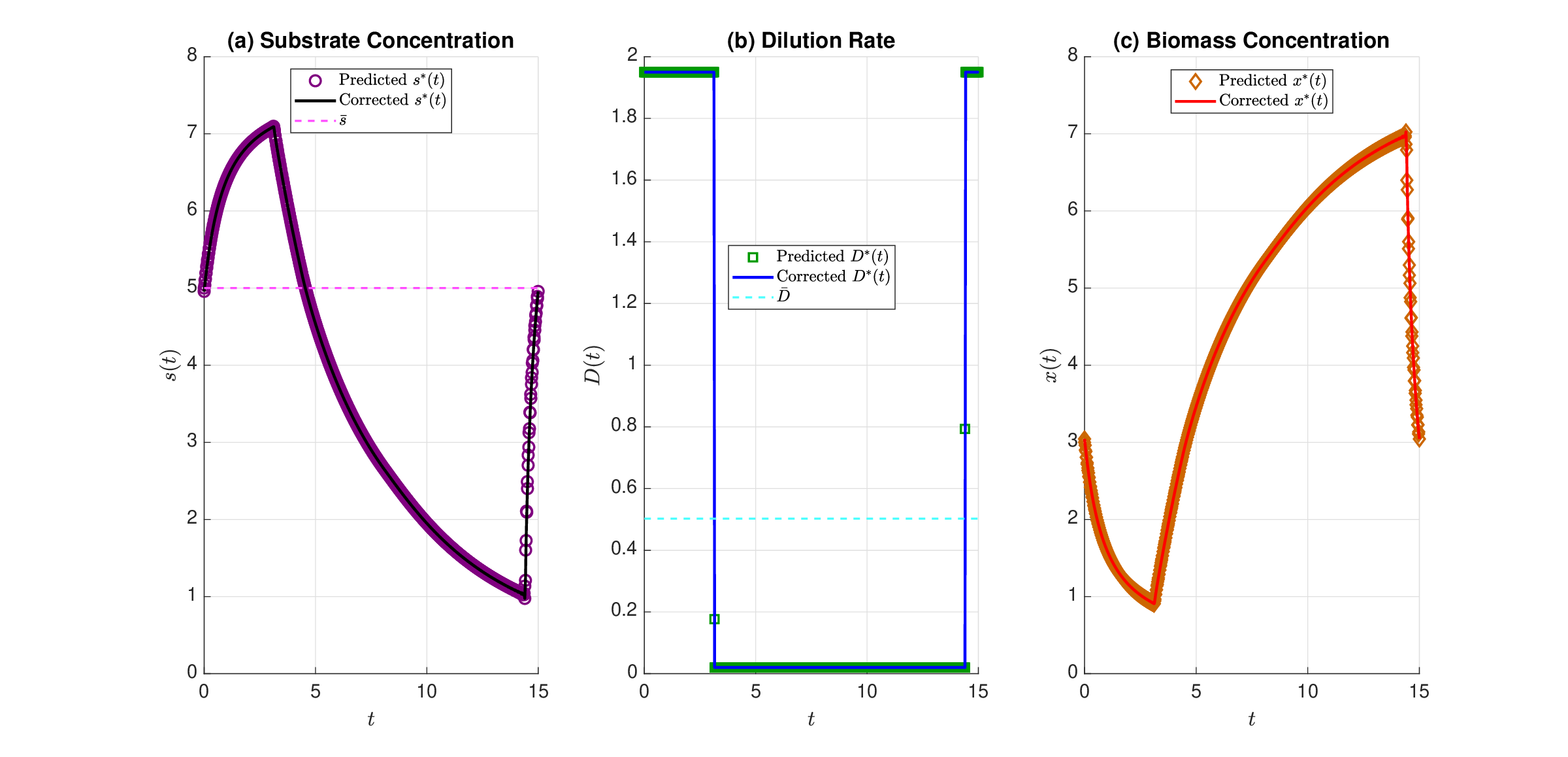}
    \caption{Time evolution (in hours) of (a) the PSC $s^*(t)$, (b) the OPC $D^*(t)$, and (c) the biomass concentration $x^*(t)$ of the RFOCP. The symbols  show the predicted solution values obtained at $N = 400$ equally-spaced collocation points from the numerical optimization, while the corrected solution (solid lines) is computed using a reconstructed bang-bang control law with $M=500$ interpolation points. Dashed lines indicate the average substrate concentration $\bs$ and average dilution rate $\bar{D}$, respectively.}
    \label{fig:Fig2}
\end{figure}

\begin{figure}[ht]
    \centering
    \includegraphics[scale=0.4]{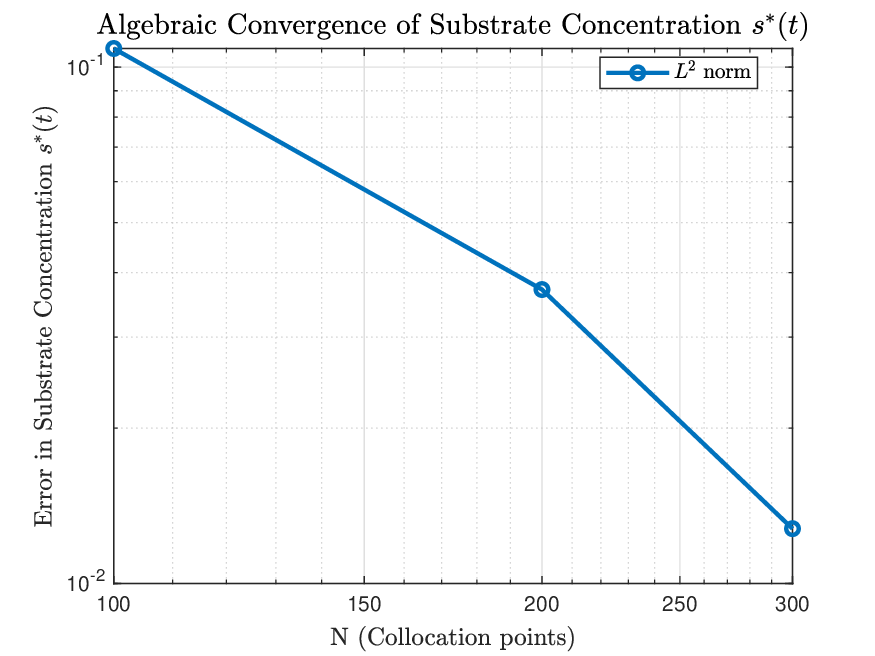}
    \caption{Algebraic convergence of the PSC $s^*(t)$ for the RFOCP. The plot shows the $L^2$-error norm in $s^*(t)$ as a function of the number of collocation points $N$. The reference solution is computed at $N=400$.}
    \label{fig:Fig3}
\end{figure}

\begin{figure}[ht]
    \centering
    \includegraphics[scale=0.4]{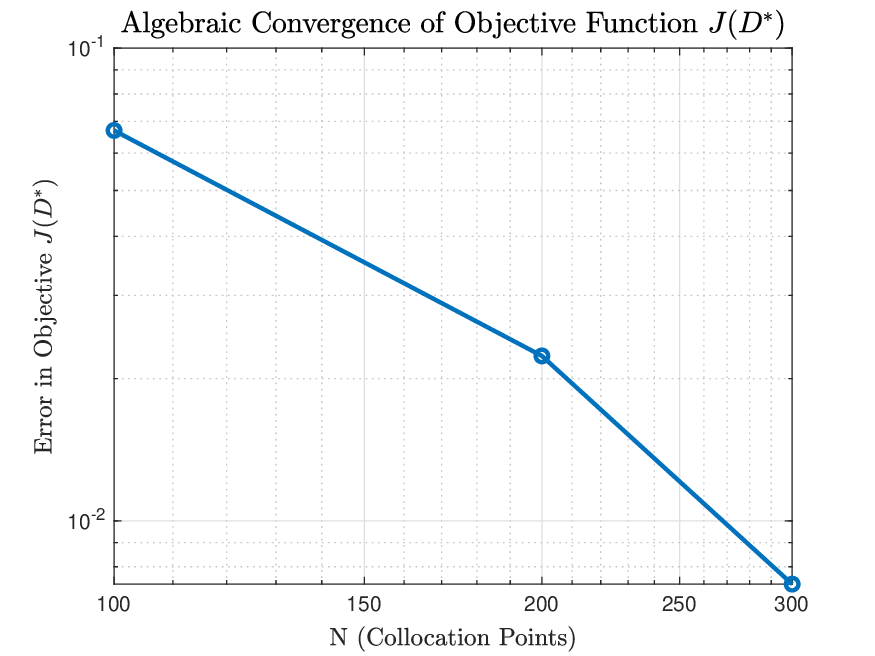}
    \caption{Algebraic convergence of the OOFV $J(D^*)$ for the RFOCP. The absolute error in the computed objective value is shown as a function of the number of collocation points $N$, with the reference value taken at $N=400$.}
    \label{fig:Fig4}
\end{figure}

To further validate the numerical convergence of the FG-PS method, we solved the RFOCP for several values of the collocation parameter $N \in \{100, 200, 300, 400\}$. The primary objective of this analysis was to examine the convergence behavior of the PSC $s^*$, the OPC $D^*$, and the corresponding OOFV $J(D^*)$. For each value of $N$, the corrected numerical solutions were interpolated onto a common finer grid of $M = 500$ equispaced points to facilitate consistent comparison against a reference solution computed using $N = 400$. Figures \ref{fig:Fig3} and \ref{fig:Fig4} illustrate the convergence and accuracy characteristics of the method. Specifically, Figure \ref{fig:Fig3} demonstrates the algebraic convergence of the PSC $s^*(t)$, as reflected by the decay in the $L^2$-error norm with increasing $N$. Figure \ref{fig:Fig4} presents the convergence behavior of the OOFV $J(D^*)$, with the absolute error steadily decreasing as $N$ increases. Remarkably, the switching times agree to full machine precision at about $t = 3.131$ h and $t = 14.41$ h, rounded to four significant digits, across all discretization levels ($N = 100, 200, 300, 400$), demonstrating perfect numerical reproduction of the control structure's temporal features, despite its discontinuous, bang-bang nature. These results confirm that the FG-PS method, equipped with the edge-detection correction technique, produces robust and accurate approximations of the state and control variables, as well as the associated performance index, even in the presence of nonsmooth control profiles. Furthermore, Figure \ref{fig:Fig3} demonstrates an algebraic convergence decay in the $L^2$-norm of the errors in $s^*$ with respect to $N$. This behavior aligns with the expected reduction in global spectral convergence rates due to the discontinuities inherent in the bang-bang control $D^*$. The consistent reduction in the absolute error of the average substrate concentration $s^*\av$ with increasing $N$ further supports the reliability of the method in resolving the system dynamics and the sharp switching behavior of the OC. 

Figures \ref{fig:Fig5} and \ref{fig:Fig6} offer valuable insights into the behavior of the OC structure and its corresponding performance as the fractional order $\alpha$ varies. Figure \ref{fig:Fig5} illustrates the switching times $\xi_k$ of the optimal bang-bang control across different values of $\alpha$ in the FOCM. Each plotted symbol represents a distinct switching event, revealing how the number and location of switching points are sensitive to the memory effect introduced by the fractional-order dynamics. A detailed summary of the number and approximate locations of switches, along with the corresponding average substrate concentrations $s\av^*$, is provided in Table \ref{tab:alpha_switches}. The results in this table highlight that while the number of switches remains even, as guaranteed by the PMP analysis, their frequency and positions vary nonlinearly with $\alpha$, reflecting the nonlocal influence of historical states. A notable trend is observed here where lower values of the fractional order parameter $\alpha$ result in a higher number of control switches in the bang-bang control strategy. This increased switching frequency at lower $\alpha$ can be attributed to the stronger memory effect of the CFDS, which necessitates more frequent adjustments in the dilution rate $D^*$ to maintain optimal substrate concentration. In other words, to counterbalance the inertia introduced by strong memory at low $\alpha$, the OPC must respond more frequently, resulting in a higher number of switches to steer the system effectively within the constraints. Complementarily, Figure \ref{fig:Fig6} depicts the average substrate concentration $s^*\av$ achieved under the OC for varying $\alpha$. Interestingly, $s^*\av$ increases from $\alpha = 0.1$ to $\alpha = 0.3$, peaks at $\alpha = 0.3$, and then shows a monotonic decrease as $\alpha$ increases to $0.4$, $0.5$, $0.6$, $0.7$, $0.8$, and $0.9$. This observation indicates that the performance does not decrease uniformly with increasing $\alpha$, and intermediate values such as $\alpha = 0.3$ may yield higher average substrate concentrations than expected. Moreover, the results manifest that for all tested fractional orders, there exist non-constant periodic control strategies that outperform the corresponding steady-state solutions, yielding lower average substrate concentrations and improving pollutant removal efficiency. These findings confirm that tuning the fractional order serves as a powerful lever for improving system performance and that the effectiveness of periodic control relative to steady-state operation depends critically on the degree of memory in the system. Consequently, selecting microbial species with inherently low memory effects (i.e., high fractional order $\alpha$ close to $1$) can significantly improve water quality, as such species respond more effectively to time-varying OC strategies.

\begin{figure}[ht]
    \centering
    \includegraphics[scale=0.4]{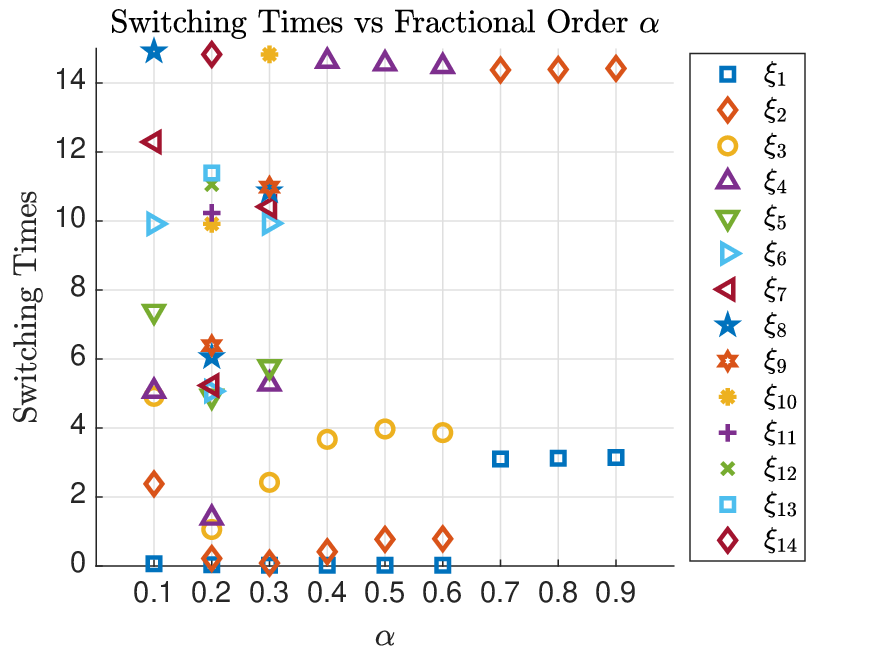}
    \caption{Switching times $\xi_k$ (in hours) of the optimal bang-bang control as a function of the fractional order $\alpha$ in the FOCM, obtained using $N = 300$ and $M = 400$. All other parameter values were taken from Table \ref{tab:parameters}. Each symbol corresponds to a different switching event, illustrating how the control structure changes with the order of the FD.}
    \label{fig:Fig5}
\end{figure}

\begin{table}[ht]
\centering
\resizebox{0.45\textwidth}{!}{%
\begin{tabular}{|c|c|>{\raggedright\arraybackslash}p{9cm}|c|}
\hline
$\alpha$ & \makecell{No. of\\Switches} & Approximate Switching Times ($\xi_k$) & $s\av^*$ \\
\hline
0.1 & 8 & 0.0676, 2.380, 4.917, 5.068, 7.380, 9.917, 12.29, 14.92 & 3.839 \\
\hline
0.2 & 14 & \makecell[l]{0.0375, 0.2177, 1.059, 1.389, 4.917, 5.068, 5.233, 6.059, 6.389,\\ 9.917, 10.23, 11.06, 11.39, 14.83}  & 3.980 \\
\hline
0.3 & 10 & \makecell[l]{0.0225, 0.0826, 2.425, 5.278, 5.773, 9.932, 10.41, 10.86, 10.98,\\ 14.83} & 4.280 \\
\hline
0.4 & 4 & 0.0225, 0.4129, 3.671, 14.63 & 4.237 \\
\hline
0.5 & 4 & 0.0225, 0.7733, 3.971, 14.56 & 4.104 \\
\hline
0.6 & 4 & 0.0225, 0.7883, 3.866, 14.48 & 4.023 \\
\hline
0.7 & 2 & 3.101, 14.38 & 3.798 \\
\hline
0.8 & 2 & 3.123, 14.39 & 3.687 \\
\hline
0.9 & 2 & 3.146, 14.42 & 3.596 \\
\hline
\end{tabular}
}
\caption{Number and approximate locations of control switches for different values of $\alpha$.}
\label{tab:alpha_switches}
\end{table}

\begin{figure}[ht]
    \centering
    \includegraphics[scale=0.4]{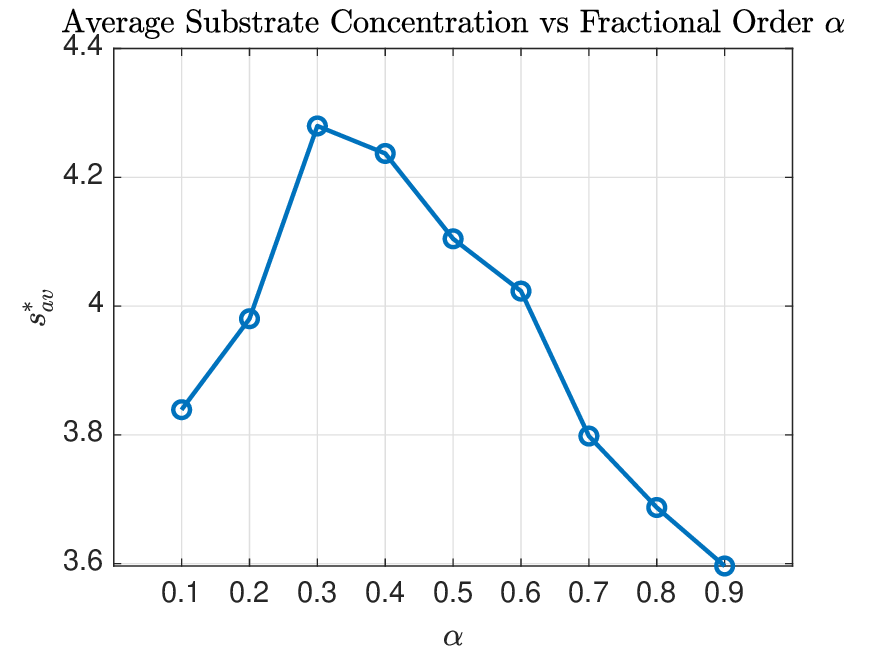}
    \caption{Sensitivity to Fractional Order $\alpha$: Average substrate concentration $s^*\av$ as a function of the fractional order $\alpha$ for the OC of the FOCM, obtained using $N = 300$ and $M = 400$. The plot is generated for $\alpha \in \{0.1, 0.2, 0.3, 0.4, 0.5, 0.6, 0.7, 0.8, 0.9\}$. All other parameter values were taken from Table \ref{tab:parameters}. The results are obtained by solving the RFOCP for various values of $\alpha$ using the specified system and optimization parameters.}
    \label{fig:Fig6}
\end{figure}

In Figure \ref{fig:Fig7}, the effect of the sliding memory length $L$ on the average substrate concentration $s\av$ is analyzed in the context of the RFOCP, where $L$ exclusively influences the CFDS. The figure shows that as $L$ increases from 0.5 to 1.5, $s\av$ slightly increases, suggesting a mild degradation in performance when the memory window is too short to capture sufficient historical dynamics. Beyond $L = 1.5$, $s\av$ declines consistently with increasing $L$, indicating improved pollutant removal efficiency as the CFDS incorporates a richer history of the system's state evolution. This trend continues until approximately $L = 10$, after which the curve flattens, implying that the marginal benefit of extending the memory window diminishes. In other words, beyond $L = 10$, the benefits plateau, suggesting a point of diminishing returns where extending the memory window no longer yields significant performance gains. Since $L$ directly affects the memory range of the FD, this behavior highlights the importance of tuning $L$ to balance the cost and accuracy of the approximate FD with the benefits of nonlocal memory effects. Under the given data, moderate values of $L$ (like around $10$) are sufficient to exploit the memory structure effectively for OC performance.

\begin{figure}[ht]
    \centering
    \includegraphics[scale=0.4]{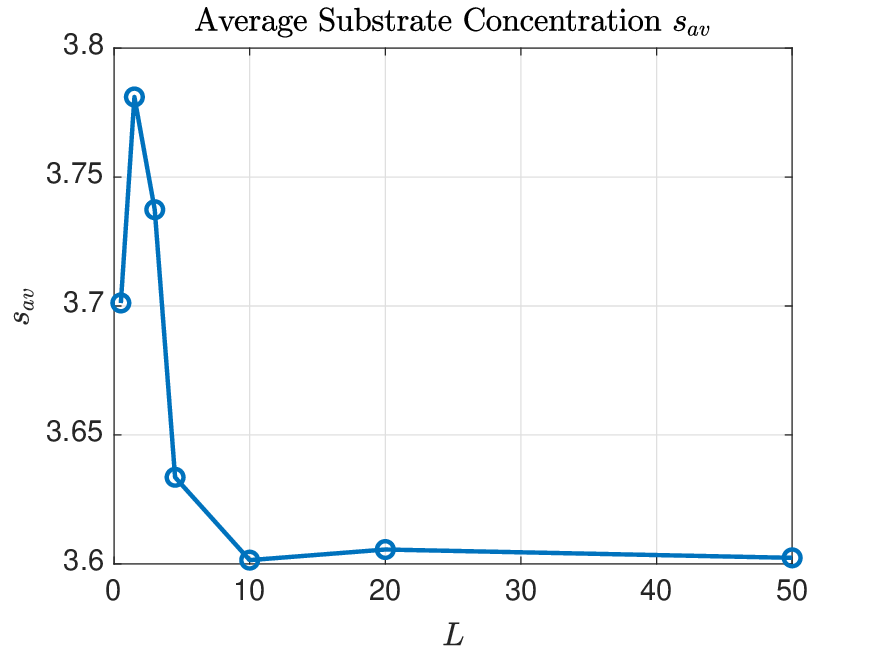}
    \caption{Sensitivity to Memory Length $L$: Dependence of the average substrate concentration $s\av$ on the sliding memory length $L$ for the RFOCP. The plot is generated for $L \in \{0.5,\, 1.5,\, 3,\, 4.5,\, 10,\, 20,\, 50\}$. All other parameter values were taken from Table \ref{tab:parameters}.}
    \label{fig:Fig7}
\end{figure}

Figure \ref{fig:Fig8} illustrates the impact of the dynamic scaling parameter $\vartheta$ on the average substrate concentration $s\av$ for biological water treatment. The plot shows a monotonic decrease in $s\av$ as $\vartheta$ increases, reflecting the scaling effect of $\vartheta^{1-\alpha}$ on the right-hand side of the FDE \eqref{eq:reducedFDE} governing the chemostat dynamics. Larger $\vartheta$ values amplify the magnitude of the system dynamics, which improves the responsiveness of microbial activity to control inputs, leading to more effective pollutant degradation and lower $s\av$. Conversely, smaller $\vartheta$ values reduce the dynamic response, resulting in higher $s\av$ due to less effective substrate consumption. This trend highlights the importance of tuning $\vartheta$ to optimize the system's dynamic response, complementing the role of the fractional order $\alpha$, where higher $\alpha$ (weaker memory effects) further improves performance by reducing the influence of historical states, as shown in Figure \ref{fig:Fig6}. Notice that the reduction in the minimum average substrate concentration at $\vartheta = 32$, where $s\av \approx 3.001 \, \text{mg/L}$ compared to steady-state operation $\bs = 5 \, \text{mg/L}$, is approximately $39.98\%$. This nearly 40\% reduction is substantial and serves as persuasive evidence that fractional-order control with properly tuned parameters (here $\vartheta = 32$) can significantly outperform steady-state strategies.

\begin{figure}[ht]
    \centering
    \includegraphics[scale=0.4]{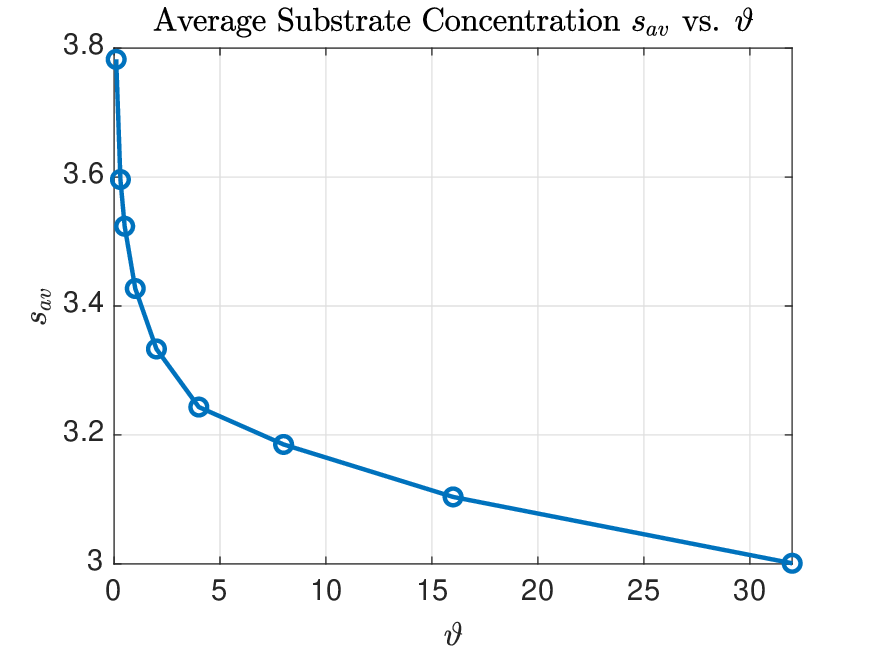}
    \caption{Sensitivity to Scaling Parameter $\vartheta$: Average substrate concentration $s\av$ as a function of the parameter $\vartheta$. The plot is generated for $\vartheta \in \{0.1, 0.3, 0.5, 1, 2, 4, 8, 16, 32\}$. All other parameter values were sourced from Table \ref{tab:parameters}.}
    \label{fig:Fig8}
\end{figure}

Figure \ref{fig:Fig9} shows how $s\av$ varies with $T$ in the range from 1 to 20 hours. We clearly see that $s\av$ decreases as the control period $T$ increases, reflecting improved pollutant removal efficiency in the bioprocess. This indicates that longer periodic cycles provide microorganisms sufficient time to adapt to changing environmental conditions and dilution regimes, thus improving substrate uptake. In contrast, shorter $T$ values may not permit adequate synchronization between the dilution rate and the slower microbial growth responses governed by Contois kinetics, resulting in suboptimal pollutant degradation. Therefore, tuning $T$ appropriately improves system responsiveness and biological efficiency, underscoring the importance of harmonizing periodic control inputs with the intrinsic adaptation timescales of microbial populations.

\begin{figure}[ht]
    \centering
    \includegraphics[scale=0.4]{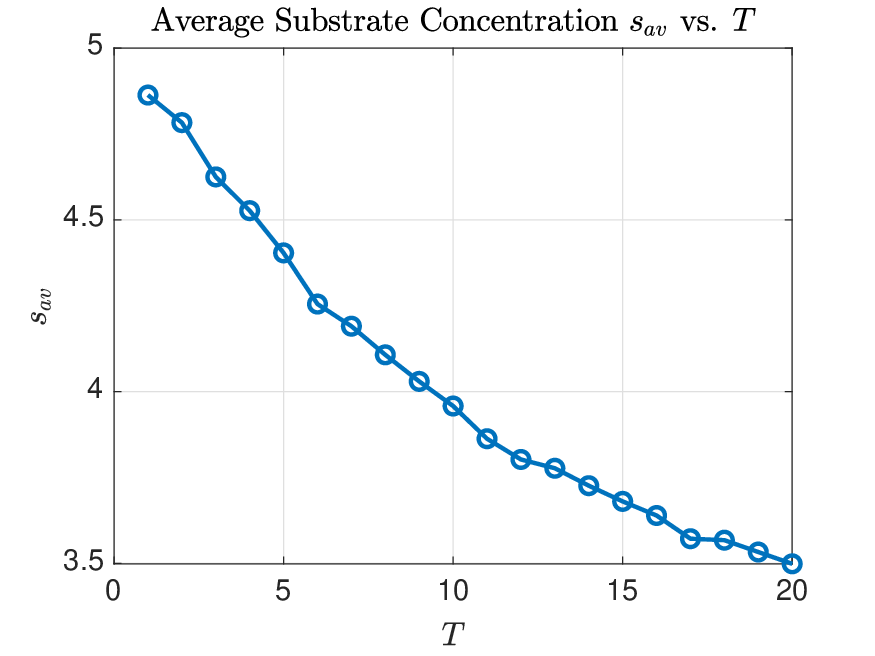}
    \caption{Sensitivity to Time Horizon $T$ (in hours): Average substrate concentration $s\av$ as a function of the time horizon $T$ for $L = 4$. All other parameter values were sourced from Table \ref{tab:parameters}.}
    \label{fig:Fig9}
\end{figure}

Figure \ref{fig:Fig10} displays the residuals associated with the FDEs governing the substrate and biomass concentrations in the 2D FOCS. The figure serves to validate the analytical expression \eqref{eq:biomass2} by independently solving the 2D FOCS. The consistently small residual values across the entire time interval confirm the high accuracy of the numerical approximations and support the validity of the substrate and biomass dynamics under the optimal dilution control strategy.

To further validate the robustness of our numerical approach, Figures \ref{fig:NewFigNov1} and \ref{fig:NewFigNov2} present the OPSs obtained when the initial guesses for the state and control variables are perturbed by $0.1$ and $0.2$, respectively. Remarkably, both simulations converge to the same optimal solution, achieving an average substrate concentration of $s\av \approx 3.622$ mg/L. The resulting PSC, OPC, and biomass concentration profiles are visually identical to those shown earlier in Figure \ref{fig:Fig1}, demonstrating the numerical method's insensitivity to initial guess variations. This consistency confirms the reliability of the FG-PS discretization combined with the edge-detection correction technique. The method's robustness is particularly valuable for practical applications where precise initial conditions may not be known a priori, ensuring that the OPC strategy can be reliably computed for real-world bioprocess optimization.

\begin{figure}[ht]
    \centering
    \includegraphics[scale=0.4]{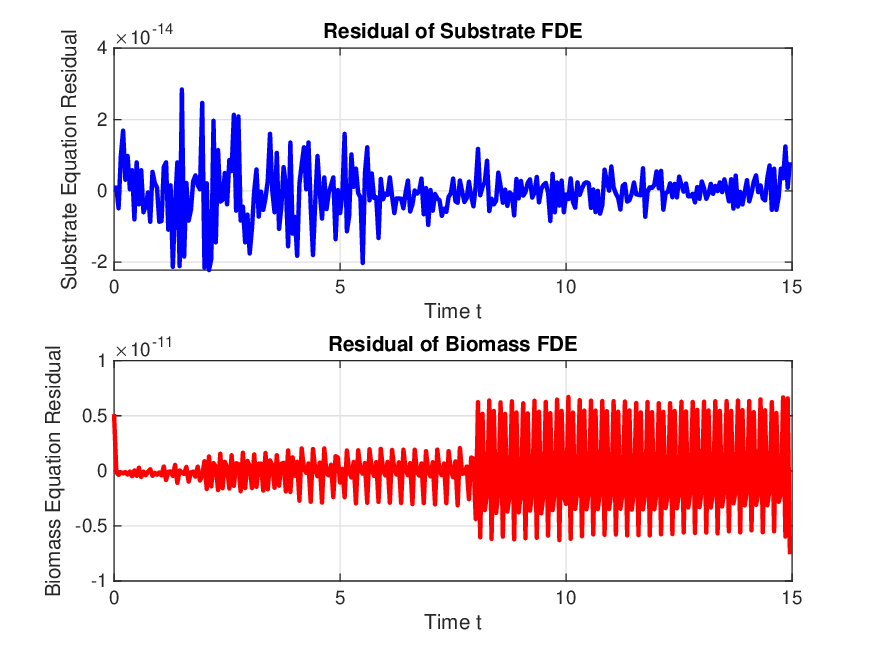}
    \caption{Residuals of the fractional chemostat model equations over time (in hours). \textit{Top:} The residuals of the substrate concentration FDE, representing the difference between the computed CFDS and the model's right-hand side at collocation points. \textit{Bottom:} The residuals of the biomass concentration FDE, computed similarly.}
    \label{fig:Fig10}
\end{figure}

\begin{figure}[ht]
    \centering
    \includegraphics[width=10cm]{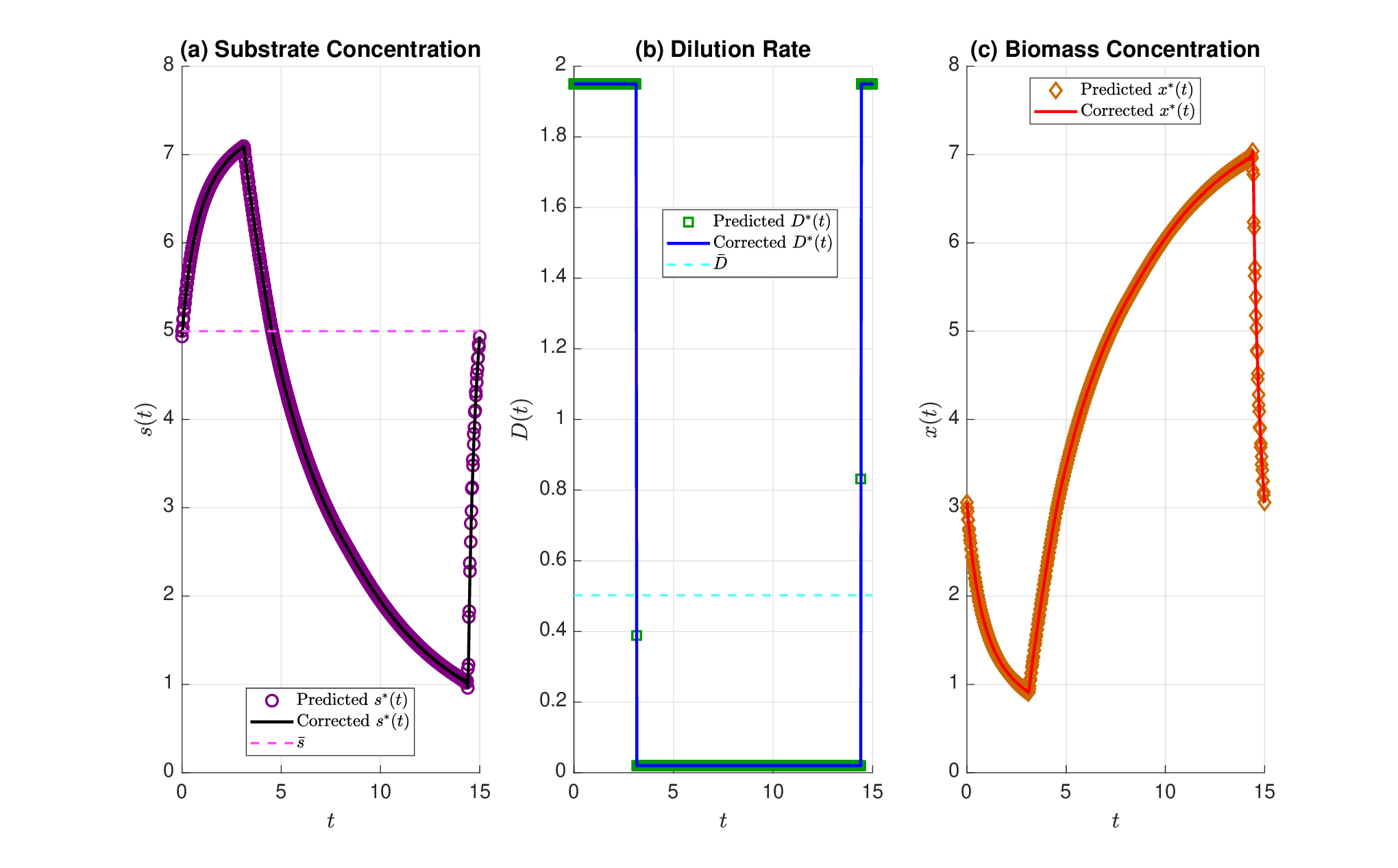}
    \caption{Time evolution (in hours) of (a) the PSC $s^*(t)$, (b) the OPC $D^*(t)$, and (c) the corresponding biomass concentration $x^*(t)$ of the RFOCP. The symbols  show the predicted solution values obtained at $N = 300$ equally-spaced collocation points from the numerical optimization, where the initial solution value guesses have been perturbed by $0.1$. The corrected solution (solid lines) is computed using a reconstructed bang-bang control law with $M=400$ interpolation points. Dashed lines indicate the average substrate concentration $\bs$ and average dilution rate $\bar{D}$, respectively.}
    \label{fig:NewFigNov1}
\end{figure}

\begin{figure}[ht]
    \centering
    \includegraphics[width=10cm]{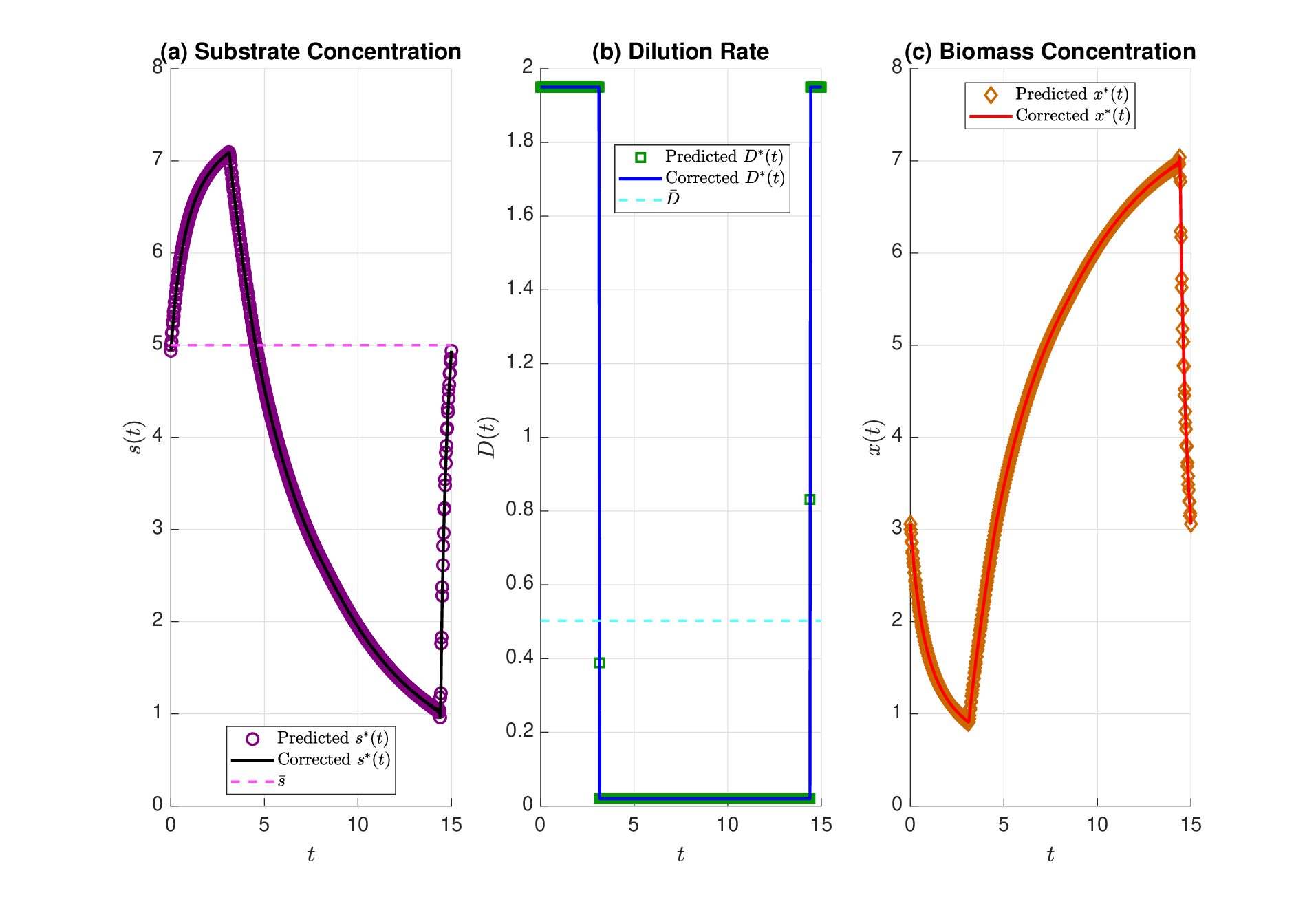}
    \caption{Time evolution (in hours) of (a) the PSC $s^*(t)$, (b) the OPC $D^*(t)$, and (c) the corresponding biomass concentration $x^*(t)$ of the RFOCP. The symbols  show the predicted solution values obtained at $N = 300$ equally-spaced collocation points from the numerical optimization, where the initial solution value guesses have been perturbed by $0.2$. The corrected solution (solid lines) is computed using a reconstructed bang-bang control law with $M=400$ interpolation points. Dashed lines indicate the average substrate concentration $\bs$ and average dilution rate $\bar{D}$, respectively.}
    \label{fig:NewFigNov2}
\end{figure}

\section{Conclusion and Discussion}
\label{eq:Conc}
This study has delved into the intricate dynamics of bioprocesses, specifically focusing on the role of fractional-order calculus in modeling microbial memory and its implications for OC strategies in chemostat systems. Our findings underscore the critical importance of the fractional-order parameter $\alpha$, which serves as a quantitative metric for microbial adaptation latency and memory effects. A lower $\alpha$ signifies a microbial population with sluggish adaptive responses, characterized by a strong `long-term memory' of past nutrient levels and environmental conditions. This biological inertia necessitates highly dynamic and frequent adjustments of the optimal dilution rate to maintain effective pollutant degradation and biomass stability, as evidenced by our numerical simulations in Section \ref{sec:NTP1}. Conversely, a higher $\alpha$ indicates a more agile microbial community that adapts rapidly to environmental fluctuations, allowing the OPC scheme to achieve efficient substrate removal with fewer interventions. This observed inverse relationship between $\alpha$ and the control switching frequency highlights a fundamental trade-off: a high switching frequency at low $\alpha$ acts as a compensatory mechanism for inherent biological sluggishness, while fewer switches at high $\alpha$ reflect a system that is intrinsically more responsive and requires minimal external intervention to sustain performance. These insights are crucial for designing robust and efficient bioprocesses, suggesting that systems populated by slow-adapting microorganisms demand more complex and energy-intensive control strategies. Conversely, the selection or engineering of faster-adapting strains (i.e., those exhibiting higher $\alpha$ values) could significantly simplify control demands, leading to more cost-effective and streamlined system designs. The profound connection between the fractional-order $\alpha$ and control switching frequency strongly emphasizes the necessity of integrating memory effects into both the theoretical modeling and practical optimization of real-world bioprocesses.

Beyond these fundamental insights, this work has made several significant contributions to the field of bioprocess engineering and fractional calculus: (i) We developed a pioneering FOCM that incorporates the CFDS effect. This model extends the traditional integer-order framework, offering a more realistic representation of microbial growth and substrate degradation by capturing memory-dependent dynamics and non-local effects that are prevalent in complex biological systems. This advancement provides a more accurate predictive tool for bioprocess behavior. (ii) We successfully reduced the 2D FOCS to a 1D FDE through a novel transformation linking substrate and biomass concentrations. This simplification, derived from established principles, significantly improves analytical and computational efficiency without compromising the essential dynamics of the system, thereby facilitating more tractable optimization problems. (iii) We rigorously formulated an FOCP aimed at minimizing the average pollutant concentration under OPC, explicitly integrating fractional-order dynamics and practical constraints on dilution rates and treatment capacity. This formulation extends the applicability of periodic control strategies to fractional-order systems, addressing a critical gap in existing literature. (iv) We established the existence of OPSs for the RFOCP. Furthermore, we provided conditions for the uniqueness of these solutions under specific parameter regimes, thereby improving the theoretical robustness and practical reliability of the proposed control approach. (v) By optimizing OPC within a fractional-order context, this research offers tangible insights into improving the efficiency of water treatment processes. The model's ability to account for historical system behavior through its memory effect can guide the design of more effective control strategies, potentially leading to reduced operational costs and a diminished environmental footprint, contributing directly to cleaner water and healthier ecosystems.

Complementing the role of the fractional order $\alpha$, our results reveal that the dynamic scaling parameter $\vartheta$ also plays a pivotal role in optimizing system behavior. Specifically, increasing $\vartheta$ intensifies the system's sensitivity to control inputs, accelerating pollutant degradation and enabling sharper, more effective bang-bang control responses. Numerical simulations confirm that properly tuned $\vartheta$ values can lead to reductions in average substrate concentration of up to 40\%, far outperforming steady-state control strategies. Thus, $\vartheta$ serves as a critical design parameter in boosting the effectiveness of fractional-order control schemes.

Complementing these findings, our results indicate that the sliding memory length $L$ plays an important role in utilizing historical system behavior to improve bioprocess performance. As $L$ increases, the CFDS incorporates a broader temporal window, capturing persistent microbial dynamics and adaptation latency with greater accuracy. Numerical experiments show that moderate to large values of $L$ consistently yield lower average substrate concentrations, improving pollutant removal efficiency. Careful tuning of $L$ allows engineers to balance memory-driven responsiveness with practical implementation constraints, making it a key design parameter in fractional-order control systems.

Compared to existing integer-order methods (e.g., \cite{bayen2020improvement}), our fractional-order approach captures memory effects, leading to up to 40\% better pollutant removal by accounting for historical states in microbial dynamics. Relative to other traditional fractional control schemes, the CFDS preserves periodicity, reduces computational burden (finite window vs. full history), and enables efficient PS discretization for periodic problems, improving accuracy and scalability for real-time water treatment applications.

While this study focuses on fractional-order deterministic modeling, it is valuable to contextualize our approach relative to other modern modeling frameworks: (i) \textit{Stochastic Models}: Unlike stochastic approaches that incorporate random fluctuations in microbial growth or environmental conditions \cite{smeets2010stochastic}, our fractional-order deterministic framework captures systematic memory effects through the CFDS. Stochastic models excel at quantifying uncertainty but may require extensive statistical data, whereas our approach provides a deterministic characterization of memory-driven dynamics that complements stochastic analysis. (ii) \textit{Hybrid Systems}: Hybrid models combining continuous dynamics with discrete events (see \cite{teel2012hybrid}) may capture operational switches in water treatment plants. Our fractional-order framework offers an alternative by modeling continuous memory effects without requiring explicit discrete state transitions, providing a different perspective on the system dynamics. (iii) \textit{Machine Learning Approaches}: Data-driven methods like neural networks or reinforcement learning may capture complex patterns from operational data \cite{lowe2022review} but may lack interpretable physical and biological insights. Our model maintains biological interpretability through established microbial growth kinetics while sustaining them with fractional calculus, bridging first-principles modeling with data-informed memory effects. The fractional-order approach presented here particularly excels in capturing hereditary effects and long-range temporal dependencies, which are challenging to represent in conventional integer-order models and may require complex memory architectures in machine learning approaches.

In conclusion, this work represents a significant step towards a more nuanced understanding and effective control of bioprocesses by integrating the concept of microbial memory through fractional-order calculus. The theoretical advancements and practical implications presented herein lay a robust foundation for future research aimed at developing more sustainable and efficient biotechnological solutions for environmental and industrial challenges. Future work could explore time-varying upper bounds on state and control functions to model dynamic constraints in real-world bioprocesses such as varying inlet concentrations or seasonal environmental factors, as well as extensions to multi-compartment systems and more general fractional optimal control frameworks. Additionally, chaos analysis could provide valuable insights into the nonlinear dynamics of FOCSs under extreme parameter regimes or external disturbances, potentially revealing bifurcation behaviors and chaotic regimes that may inform operational boundaries for practical implementations. Furthermore, one may explore time-varying fractional orders $\alpha(t)$ to model adaptive microbial memory effects, where the fractional order varies based on environmental conditions or operational phases.  

\section*{Limitations and Future Validation}
While the proposed FOCM demonstrates significant theoretical and numerical advancements in modeling memory-driven bioprocesses, we acknowledge the importance of experimental validation. The current study focuses on developing a novel fractional-order theoretical framework and demonstrating its potential through high-fidelity simulations. The parameters are selected from literature-reported realistic ranges \cite{bayen2020improvement,grady2011biological} and adjusted for numerical and theoretical purposes. We hope that a future companion study could validate the FOCM against pilot-scale experimental data from a real wastewater treatment facility, where inlet/outlet substrate concentrations, biomass density, and dilution rate profiles will be measured and compared with model predictions. This validation would provide critical empirical support for the FOCM incorporating CFDS, confirming its ability to capture long-term memory and non-local effects in microbial population dynamics. By aligning model predictions with real-world data on periodic dilution, substrate reduction, and biomass concentration, the study would strengthen the practical applicability of the proposed periodic fractional control strategies for optimizing clean water production and ecosystem health preservation in wastewater treatment processes.

\section*{Declarations}

\subsection*{Competing Interests}
The authors declare that they have no competing interests.

\subsection*{Availability of Supporting Data}
All data generated or analyzed during this study are included in this published article.

\subsection*{Ethical Approval and Consent}
This study does not involve human participants, animal subjects, or clinical data, and therefore does not require ethical approval or consent.

\subsection*{Funding}
This article was supported by Ajman University Internal Research Grant No. 2025-IRG-CHS-1. The research findings presented in this article are solely the authors responsibility.

\section*{CRediT Author Statement}
Kareem T. Elgindy: Conceptualization, Methodology, Formal Analysis, Software, Investigation, Visualization, Writing Original Draft, Project Administration.\\
Muneerah Al Nuwairan: Formal Analysis, Review \& Editing.\\
Liew Siaw Ching: Formal Analysis, Review \& Editing.

\subsection*{Acknowledgment}

\appendix
\section{Lemma on the Integral of the CFDS}
\label{app:LIC1}
\begin{lemma}\label{lem:1}
Let $s \in AC_T$. Then, for any period $T > 0$, memory length $L > 0$, and fractional order $\alpha \in (0, 1]$, the integral of the CFDS over one period vanishes:
\begin{equation}\label{eq:CFDSzer1}
\int_0^T {}_L^{MC} D_t^\alpha s(t) \, dt = 0.
\end{equation}
\end{lemma}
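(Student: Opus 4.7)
The plan is to unfold the definition of the CFDS, exchange the order of integration, and exploit the $T$-periodicity of $s$ (and hence of $s'$) to collapse the inner integral. Substituting the definition gives
\begin{equation*}
\int_0^T {}_L^{\text{MC}} D_t^\alpha s(t)\, dt = \frac{1}{\Gamma(1-\alpha)} \int_0^T \int_{t-L}^{t} (t-\tau)^{-\alpha} s'(\tau)\, d\tau\, dt,
\end{equation*}
so the goal reduces to showing that the double integral on the right vanishes.

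Next, I would perform the change of variables $u = t - \tau$ in the inner integral, turning the $\tau$-integral into $\int_0^L u^{-\alpha} s'(t-u)\, du$. I would then apply Fubini's theorem to swap the order of integration, yielding
\begin{equation*}
\frac{1}{\Gamma(1-\alpha)} \int_0^L u^{-\alpha} \left( \int_0^T s'(t-u)\, dt \right) du.
\end{equation*}
Since $s \in AC_T$, the function $s'$ exists a.e., is $T$-periodic, and lies in $L^1([0,T])$. For every fixed $u \in [0,L]$, the substitution $v = t - u$ combined with $T$-periodicity gives $\int_0^T s'(t-u)\, dt = s(T-u) - s(-u) = 0$, so the whole expression vanishes.

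The main obstacle is a careful justification of Fubini, because the kernel $u^{-\alpha}$ is singular at $u=0$. To handle this I would estimate the modulus: by Tonelli,
\begin{equation*}
\int_0^T \int_0^L u^{-\alpha} |s'(t-u)|\, du\, dt = \int_0^L u^{-\alpha} \left( \int_0^T |s'(t-u)|\, dt \right) du = \|s'\|_{L^1([0,T])} \int_0^L u^{-\alpha}\, du,
\end{equation*}
where the inner integral is independent of $u$ by periodicity. Since $\alpha \in (0,1]$ makes $\int_0^L u^{-\alpha}\, du = L^{1-\alpha}/(1-\alpha)$ finite for $\alpha<1$ (and the case $\alpha=1$ is handled separately, reducing to the classical derivative for which the identity is immediate), the double integral is absolutely convergent and Fubini applies, completing the proof.
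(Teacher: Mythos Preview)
Your proposal is correct and follows essentially the same route as the paper: substitute $u=t-\tau$, interchange the order of integration, and use $T$-periodicity of $s'$ to kill the inner integral, with the case $\alpha=1$ handled separately. If anything, your version is slightly more careful, since you explicitly justify the interchange via Tonelli and the finiteness of $\int_0^L u^{-\alpha}\,du$, a step the paper's proof takes for granted.
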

\begin{proof}
We consider two cases:\\[-1em]

\noindent \textbf{Case 1: $\alpha \in (0, 1)$:} Since $s$ is $T$-periodic and absolutely continuous, its derivative $s'$ exists a.e., is Lebesgue integrable, and is also $T$-periodic. The CFDS is given by:
\[
{}_L^{MC} D_t^\alpha s(t) = \frac{1}{\Gamma(1 - \alpha)} \int_{t - L}^t (t - \tau)^{-\alpha} s'(\tau) \, d\tau.
\]
Substitute $u = t - \tau$:
\[
\int_{t - L}^t (t - \tau)^{-\alpha} s'(\tau) \, d\tau = \int_0^L u^{-\alpha} s'(t - u) \, du.
\]
If $t - u < 0$, we exploit the periodicity of $s'$: for any $u \in [0, L]$, there exists an integer $k$ such that $t - u + kT \in [0, T]$, and thus $s'(t - u) = s'(t - u + kT)$. Now, interchange the integrals:
\[
\int_0^T \int_0^L u^{-\alpha} s'(t - u) \, du \, dt = \int_0^L u^{-\alpha} \left( \int_0^T s'(t - u) \, dt \right) du.
\]
The inner integral evaluates to:
\[
\int_0^T s'(t - u) \, dt = s(T - u) - s(-u) = 0,
\]
where the last equality follows from $s$ being $T$-periodic. Thus, the original integral vanishes.\\[-1em]

\noindent \textbf{Case 2: $\alpha = 1$:} The CFDS reduces to the ordinary derivative:
\[
{}_L^{MC} D_t^1 s(t) = s'(t),
\]
and the integral becomes:
\[
\int_0^T s'(t) \, dt = s(T) - s(0) = 0,
\]
again by periodicity. This completes the proof.
\end{proof}

To further verify Lemma \ref{lem:1} numerically, we computed\\ $\int_0^T {}_L^{\text{MC}} D_t^\alpha s(t) \, dt$ numerically for the test problem studied in Section \ref{sec:NTP1} using $N = 300$ and the data in Table \ref{tab:parameters}. The approximation was based on \cite[Formula (4.7)]{elgindy2023new}:
\[
\int_0^T {}_L^{\text{MC}} D_t^\alpha s(t) \, dt \approx \frac{T}{N} \sum_{i=0}^{N-1} {}_L^{\text{MC}} D_{t_i}^\alpha s(t),
\]
where $t_0, t_1, \ldots, t_{N-1}$ are the collocation points. The computed value was about $2.055 \times 10^{-16}$, which aligns perfectly with Lemma \ref{lem:1}, and confirm that the integral of the CFDS over one period vanishes as theoretically predicted.

\begin{remark}
The vanishing of the integral of the CFDS over one period for periodic absolutely continuous functions is analogous to the classical result for standard derivatives, where $\int_0^T s'(t)\,dt = 0$ for any $T$-periodic differentiable function $s$. However, this property does not generally hold for other FD definitions. For instance, the Riemann-Liouville FD of periodic functions typically does not satisfy this zero-integral property due to its distinct kernel and non-local memory properties, which differ from those of the CFDS and do not preserve periodicity in the same way. Similarly, the classical Caputo FD, while sharing the same kernel as the CFDS, differs in its integration domain, using a fixed initial point rather than the sliding memory window of the CFDS. This difference in integration domains disrupts the zero-integral property for periodic functions in the classical Caputo case, whereas the CFDS's sliding memory aligns with periodicity to maintain this property.
\end{remark}

\section{Lemma on State Convexity}
\label{app:convexity}
\begin{lemma}[Non-convexity of State Trajectories]\label{lem:state_convexity}
Let $K Y \neq 1$, and suppose that $s_1$ and $s_2$ are two distinct periodic solutions of the FDE \eqref{eq:reducedFDE} corresponding to distinct controls $D_1$ and $D_2$ in $\C{D}$, respectively. Then for any $\lambda \in (0,1)$, the convex combination $s_\lambda = \lambda s_1 + (1-\lambda) s_2$ cannot be a solution of \eqref{eq:reducedFDE} corresponding to $D_\lambda = \lambda D_1 + (1-\lambda)D_2$.
\end{lemma}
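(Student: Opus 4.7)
I would argue by contradiction, assuming that $s_\lambda := \lambda s_1 + (1-\lambda)s_2$ satisfies the FDE \eqref{eq:reducedFDE} with control $D_\lambda := \lambda D_1 + (1-\lambda)D_2$. The strategy pits the linearity of the CFDS operator against the strict nonlinearity of the right-hand side $\C{F}(t,s;D) = \vartheta^{1-\alpha}[D(\Sin-s) - h(s)]$ in the state, where $h(s) := \nu(s)(\Sin-s)$. Since ${}_L^{\text{MC}}D_t^\alpha$ is linear by its integral definition,
$$
{}_L^{\text{MC}}D_t^\alpha s_\lambda \;=\; \lambda\,\C{F}(t,s_1;D_1) + (1-\lambda)\,\C{F}(t,s_2;D_2),
$$
which under the contradiction hypothesis must also equal $\C{F}(t,s_\lambda;D_\lambda)$. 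Using $\Sin - s_\lambda = \lambda(\Sin - s_1) + (1-\lambda)(\Sin - s_2)$, a bilinear expansion of $D_\lambda(\Sin - s_\lambda)$ collapses this equality, after cancellation of the common $\vartheta^{1-\alpha}$, to the pointwise identity
$$
h(s_\lambda(t)) - \lambda h(s_1(t)) - (1-\lambda) h(s_2(t)) \;=\; \lambda(1-\lambda)\bigl(D_1(t)-D_2(t)\bigr)\bigl(s_1(t)-s_2(t)\bigr)
$$
for a.e.\ $t \in [0,T]$.

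Next I would establish strict concavity of $h$ on $[0,\Sin]$ by direct differentiation: writing $q(s) := KY\Sin + (1-KY)s$, two applications of the quotient rule (noting $q'' \equiv 0$) yield $h''(s) = -2KY\mu_{\max}\Sin^2/q(s)^3$, which is strictly negative on $[0,\Sin]$ since $q > 0$ there for any $KY > 0$. Jensen's inequality then forces the LHS of the displayed identity to be strictly positive on $A := \{t \in [0,T] : s_1(t) \neq s_2(t)\}$. The set $A$ has positive Lebesgue measure: because $\partial F/\partial s = -D - h'(s) < 0$ renders the control-to-state map $\C{T}:\C{D}\to X$ injective (as exploited in the proof of Theorem \ref{thm:uniqueness}), the hypothesis $D_1 \neq D_2$ forces $s_1 \not\equiv s_2$.

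The main obstacle will be converting this pointwise identity into an outright contradiction, since the implied sign-matching $(D_1-D_2)(s_1-s_2) > 0$ on $A$ is not, by itself, absurd. To overcome it, I would rewrite the Jensen gap in Taylor-remainder form as $h(s_\lambda) - \lambda h(s_1) - (1-\lambda) h(s_2) = \lambda(1-\lambda)(s_1-s_2)^2\,\Psi(t)$, where $\Psi(t) := -\tfrac{1}{2}[(1-\lambda)h''(\eta_1(t)) + \lambda h''(\eta_2(t))] > 0$ for Taylor points $\eta_i(t)$ lying between $s_i(t)$ and $s_\lambda(t)$. Cancelling the factor $\lambda(1-\lambda)(s_1-s_2)$ on $A$ then yields the explicit algebraic relation $D_1(t) - D_2(t) = \Psi(t)\bigl(s_1(t) - s_2(t)\bigr)$. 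The hypothesis $KY \neq 1$ becomes decisive here, since it makes $h''$ genuinely state-dependent (for $KY=1$, $h''$ is constant and the argument degenerates), so $\Psi$ varies with $t$ through $s_1,s_2$ in a non-degenerate way. Substituting this relation into the difference of \eqref{eq:reducedFDE} for $s_1$ and $s_2$ and invoking the mean value theorem on $h(s_1)-h(s_2)$, I would reduce the problem to a homogeneous linear FDE of the form ${}_L^{\text{MC}}D_t^\alpha w = a(t)\,w$ for $w := s_1 - s_2$, and close via an energy-dissipation estimate analogous to the one already used in Section \ref{sec:SFOCM1} to rule out nontrivial $T$-periodic solutions of \eqref{eq:trans2}, thereby forcing $w \equiv 0$ and contradicting $s_1 \neq s_2$.
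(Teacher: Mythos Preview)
Your pointwise identity
\[
h(s_\lambda) - \lambda h(s_1) - (1-\lambda) h(s_2) \;=\; \lambda(1-\lambda)\,(D_1-D_2)(s_1-s_2)
\]
and the computation $h''(s) = -2KY\mu_{\max}\Sin^2/q(s)^3 < 0$ are both correct, but the closure step does not go through. After substituting $D_1-D_2 = \Psi\,(s_1-s_2)$ into the difference of the two state equations you obtain
\[
{}_L^{\text{MC}}D_t^\alpha w \;=\; \vartheta^{1-\alpha}\bigl[\Psi(t)(\Sin-s_1(t)) - D_2(t) - h'(\xi(t))\bigr]\,w,
\]
and there is no reason for the bracket to be negative: the positive term $\Psi(\Sin-s_1)$ can dominate, and $h'(\xi)$ itself changes sign on $(0,\Sin)$. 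The energy-dissipation argument invoked from \eqref{eq:trans2} relies essentially on the coefficient $-D(t)$ being strictly negative, so it does not transfer here. Worse, the relation $D_1-D_2 = \Psi\,(s_1-s_2)$ holds only on $A=\{t:s_1(t)\neq s_2(t)\}$; on the complement of $A$ the difference equation for $w=s_1-s_2$ carries the forcing term $\vartheta^{1-\alpha}(D_1-D_2)(\Sin-s)$, which need not vanish just because $w(t)=0$, so the equation for $w$ is not homogeneous on $[0,T]$. Finally, your appeal to $KY\neq 1$ is spurious: $h$ is strictly concave for \emph{every} $KY>0$, including $KY=1$, and whether $h''$ is constant or state-dependent plays no role in the argument you sketch.

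The paper's proof is structurally different. It integrates \eqref{eq:reducedFDE} over a period and uses Lemma~\ref{lem:1} to get the balance $\int_0^T D(\Sin-s)\,dt = \int_0^T \nu(s)(\Sin-s)\,dt$ for each of $(s_1,D_1)$, $(s_2,D_2)$, $(s_\lambda,D_\lambda)$, then applies Jensen's inequality to $\nu$---whose strict convexity or concavity is \emph{precisely} equivalent to $KY\neq 1$---multiplies by $(\Sin-s_\lambda)$, and integrates to contradict the balance for $(s_\lambda,D_\lambda)$. Working with $\nu$ rather than $h$ is what gives the hypothesis $KY\neq 1$ real content and sidesteps the pointwise sign obstruction you ran into.
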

\begin{proof}
We integrate the FDE \eqref{eq:reducedFDE} over $[0, T]$. By Lemma \ref{lem:1}, 
\[\int_0^T {}_L^{\text{MC}} D_t^\alpha s(t) \, dt = 0,\]
so:
\[
\int_0^T [D(t) - \nu(s(t))](\Sin - s(t)) \, dt = 0,
\]
implying the \textit{``integral balance equation''}:
\begin{equation}\label{eq:IntIdent1}
\left[D (\Sin - s)\right]\av = \left[\nu(s) (\Sin - s)\right]\av.
\end{equation}
For the steady-state, this simplifies into:
\begin{equation}\label{eq:IntIdent2}
\bD (\Sin - \bs) = \nu(\bs) (\Sin - \bs).
\end{equation}
Substitute $s_1$ and $s_2$ into the integral identity \eqref{eq:IntIdent1}:
\begin{equation}\label{eq:TIAC1_app}
\int_0^T \nu(s_i(t))(\Sin - s_i(t)) \, dt = \int_0^T D_i(t)(\Sin - s_i(t)) \, dt, \quad i = 1,2.
\end{equation}
Assume, for contradiction, that $s_\lambda(t) = \lambda s_1(t) + (1 - \lambda) s_2(t)$ for some $\lambda \in (0,1)$. From the dynamics and the convex combination of controls, the following must hold:
\begin{align}
&\int_0^T \nu(s_\lambda(t))(\Sin - s_\lambda(t)) \, dt = \int_0^T D_\lambda(t)(\Sin - s_\lambda(t)) \, dt \notag\\
&= \lambda \int_0^T D_1(t)(\Sin - s_\lambda(t)) \, dt + (1 - \lambda) \int_0^T D_2(t)(\Sin - s_\lambda(t)) \, dt.\label{eq:sddvsdb1} 
\end{align}
The condition $K Y \neq 1$ ensures $\nu$ is strictly convex/concave on $[0, \Sin]$, as
\[
\nu''(s) = \frac{2 K Y \mu_{\max} \Sin (K Y - 1)}{\left[ K Y (\Sin - s) + s \right]^3} \neq 0.
\]
So, by Jensen's inequality and the assumption that $s_1(t) \ne s_2(t)$ on a set of positive measure, we have
\[
\nu(s_\lambda(t)) \neq \lambda \nu(s_1(t)) + (1 - \lambda) \nu(s_2(t)) \quad \text{for a.e. } t.
\]
Multiplying both sides by $\Sin - s_\lambda(t) > 0$ and integrating, taking into account identities \eqref{eq:TIAC1_app}, gives:
\begin{gather*}
\int_0^T \nu(s_\lambda(t))(\Sin - s_\lambda(t)) \, dt \neq \lambda \int_0^T \nu(s_1(t))(\Sin - s_\lambda(t)) \, dt\\
+ (1 - \lambda) \int_0^T \nu(s_2(t))(\Sin - s_\lambda(t)) \, dt\\
= \lambda \int_0^T D_1(t)(\Sin - s_\lambda(t)) \, dt + (1 - \lambda) \int_0^T D_2(t)(\Sin - s_\lambda(t)) \, dt\\
= \int_0^T D_{\lambda}(t)(\Sin - s_\lambda(t)) \, dt,
\end{gather*} 
which contradicts identity \eqref{eq:sddvsdb1}. Hence, $s_\lambda$ cannot be a solution corresponding to $D_\lambda$.
\end{proof}

\section{Local Stability of the Equilibrium}
\label{app:LSE}
\begin{lemma}\label{lem:etey1}
Let $\alpha \in (0, 1)$ and $L > 0$, and consider the FDE
\[
{}^{\text{MC}}_{L}D_t^\alpha z(t) = -k z(t), \quad k > 0.
\]
Then the solution to this FDE can be expressed in the exponential form $z(t) = z(0) e^{-\lambda t}$ for some $\lambda > 0$.
\end{lemma}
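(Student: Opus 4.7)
The plan is to substitute the exponential ansatz $z(t) = z(0) e^{-\lambda t}$ directly into the given FDE, reduce the resulting condition on $\lambda$ to a scalar transcendental equation, and then invoke the intermediate value theorem together with a monotonicity argument to secure a positive root. Since the ansatz is defined on all of $\mathbb{R}$, it automatically supplies the history required by the sliding memory window $[t-L, t]$ for every $t \geq 0$.

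First, I would compute ${}^{\text{MC}}_{L}D_t^\alpha \bigl(z(0) e^{-\lambda t}\bigr)$ by differentiating to obtain $z'(\tau) = -\lambda z(0) e^{-\lambda \tau}$, plugging into the CFDS integral, and performing the change of variables $u = t - \tau$ to factor out $e^{-\lambda t}$. This yields the clean identity
\[
{}^{\text{MC}}_{L}D_t^\alpha z(t) \;=\; -\,\frac{\lambda\, z(t)}{\Gamma(1-\alpha)} \int_0^L u^{-\alpha}\, e^{\lambda u}\, du.
\]
Equating this with $-k\,z(t)$ and dividing by $z(t)$ (which never vanishes) reduces the problem to the characteristic equation
\[
F(\lambda) \;:=\; \frac{\lambda}{\Gamma(1-\alpha)} \int_0^L u^{-\alpha}\, e^{\lambda u}\, du \;=\; k.
\]

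Next, I would study $F$ on $[0,\infty)$. Since $\alpha \in (0,1)$, the factor $u^{-\alpha}$ is integrable near zero, so $F(\lambda)$ is well defined and continuous in $\lambda$ (dominated convergence on compact $\lambda$-intervals). Clearly $F(0) = 0$, and because $\int_0^L u^{-\alpha} e^{\lambda u}\,du \geq \int_0^L u^{-\alpha}\,du > 0$, we have $F(\lambda) \to \infty$ as $\lambda \to \infty$. Differentiation under the integral sign (justified by the same domination) gives
\[
F'(\lambda) \;=\; \frac{1}{\Gamma(1-\alpha)} \int_0^L u^{-\alpha}\, e^{\lambda u}\, (1 + \lambda u)\, du \;>\; 0,\quad \lambda \geq 0,
\]
so $F$ is strictly increasing. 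By continuity and the intermediate value theorem, there exists a unique $\lambda > 0$ with $F(\lambda) = k$, and with this $\lambda$ the proposed exponential satisfies the FDE.

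I do not anticipate a serious obstacle here, because the kernel $(t-\tau)^{-\alpha}$ behaves well against the exponential and all of the interchanges (change of variables, differentiation under the integral) are standard. The most delicate point is simply to record carefully that the singularity at $u = 0$ is integrable and that the interchange of differentiation and integration is legitimate on every bounded $\lambda$-interval; once these are checked, the monotonicity, the limits at $\lambda = 0$ and $\lambda \to \infty$, and the IVT conclusion all follow immediately.
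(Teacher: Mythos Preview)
Your proposal is correct and follows essentially the same route as the paper: substitute the exponential ansatz, change variables $u=t-\tau$ to obtain the characteristic equation $\lambda\int_0^L u^{-\alpha}e^{\lambda u}\,du = k\,\Gamma(1-\alpha)$, and then use continuity, the limits at $\lambda=0$ and $\lambda\to\infty$, and strict monotonicity to conclude via the intermediate value theorem. Your explicit computation of $F'(\lambda)$ is a slightly more careful justification of monotonicity than the paper gives, but the argument is otherwise identical.
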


\begin{proof}
Substituting $z(t) = z(0) e^{-\lambda t}$ into the definition of the CFDS yields:
\[
{}^{\text{MC}}_{L}D_t^\alpha z(t) = \frac{1}{\Gamma(1-\alpha)} \int_{t-L}^t (t - \tau)^{-\alpha} \left(-\lambda z(0) e^{-\lambda \tau}\right) \, d\tau.
\]
Changing variables via $u = t - \tau$ gives
\[
{}^{\text{MC}}_{L}D_t^\alpha z(t) = -\frac{\lambda z(0) e^{-\lambda t}}{\Gamma(1-\alpha)} \int_0^L u^{-\alpha} e^{\lambda u} \, du.
\]
Equating this with the right-hand side of the differential equation,
\[
-k z(t) = -k z(0) e^{-\lambda t},
\]
yields
\begin{equation}\label{eq:csnvbsdj1}
\lambda \int_0^L u^{-\alpha} e^{\lambda u} \, du = k\,\Gamma(1-\alpha).
\end{equation}
Since $k\,\Gamma(1-\alpha) > 0$, any solution $\lambda$ to Eq. \eqref{eq:csnvbsdj1} must be strictly positive. Now, it remains to verify whether Eq. \eqref{eq:csnvbsdj1} admits a solution or not. To this end, define
\[ \psi(\lambda):= \lambda \int_0^L u^{-\alpha} e^{\lambda u} du, \]
so Eq. \eqref{eq:csnvbsdj1} reads:
\begin{equation}\label{eq:csnvbsdj2}
\psi(\lambda) = k\,\Gamma(1-\alpha).
\end{equation}
Notice that $\psi$ is continuous for all $\lambda \in \mathbb{R}$, since the integrand is continuous in both $u \in (0, L]$ and $\lambda$. When $\lambda > 0$ increases, $e^{\lambda u}$ increases, so $\psi$ is strictly increasing on $(0, \infty)$. Also, 
\[\lim_{\lambda \to 0^+} \psi(\lambda) = 0,\quad \lim_{\lambda \to \infty} \psi(\lambda) = \infty,\]
because $e^{\lambda u}$ dominates the integral. Since $\psi$ is continuous, strictly increasing, and spans the interval $(0, \infty)$, Eq. \eqref{eq:csnvbsdj2} has a unique solution $\lambda > 0$ for any given $k > 0$, $\alpha \in (0, 1)$, and $L > 0$, by the Intermediate Value Theorem.
\end{proof}

\begin{theorem}\label{thm:local_stability}
Consider the FOCS governed by the FDE \eqref{eq:reducedFDE} with $\bD < \mu_{\max}$ being the constant dilution rate. Let $\bs$ given by \eqref{eq:equil2} be the equilibrium satisfying $\nu(\bs) = \bD$. Then the equilibrium $\bs$ is locally asymptotically stable, with perturbations $z(t) = s(t) - \bs$ decaying as $z(t) \sim e^{-\lambda t}$, for some $\lambda > 0$. Furthermore, for the initial condition $s(0) = s_0^- < \bs$, the solution $s$ approaches $\bs$ monotonically from below, does not reach $\bs$ in finite time, and cannot satisfy the PBC \eqref{eq:PBC1} for any $T > 0$. Similarly, for $s(0) = s_0^+ > \bs$, $s$ approaches $\bs$ monotonically from above, does not reach $\bs$ in finite time, and cannot satisfy \eqref{eq:PBC1} for any $T > 0$.
\end{theorem}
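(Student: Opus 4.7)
The plan is to linearize the FDE \eqref{eq:reducedFDE} about the equilibrium $\bs$, invoke Lemma \ref{lem:etey1} to extract exponential decay of the perturbation, and then translate that decay law into local asymptotic stability, monotone approach, and the failure of the PBC.

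First I would set $z(t) = s(t) - \bs$ and substitute into \eqref{eq:reducedFDE}. Using $\nu(\bs) = \bD$ together with a first-order Taylor expansion of $\nu$ about $\bs$, the governing equation becomes
\[
{}^{\text{MC}}_{L} D_t^\alpha z(t) = -k\,z(t) + O\bigl(z(t)^2\bigr),\qquad k := \vartheta^{1-\alpha}\,\nu'(\bs)\,(\Sin - \bs).
\]
I would then verify that $k > 0$: $\nu'(\bs) > 0$ follows from \eqref{eq:nuprime1}, the hypothesis $\bD < \mu_{\max}$ combined with \eqref{eq:equil2} yields $\bs < \Sin$, and $\vartheta^{1-\alpha} > 0$ by assumption.

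Applying Lemma \ref{lem:etey1} to the linearized equation ${}^{\text{MC}}_{L} D_t^\alpha z = -k z$ supplies a unique $\lambda > 0$ with exponential solution $z(t) = z(0)\,e^{-\lambda t}$, which gives the leading-order asymptotic behavior $z(t) \sim e^{-\lambda t}$ and, in particular, local asymptotic stability of $\bs$. The exponential form preserves the sign of $z(0)$ for all $t \ge 0$, decays strictly monotonically toward $0$, and never attains $0$ in finite time. Translating back to $s(t) = \bs + z(t)$: if $s(0) = s_0^- < \bs$ then $s(t) \nearrow \bs$ strictly monotonically from below without reaching $\bs$; and symmetrically if $s(0) = s_0^+ > \bs$, then $s(t) \searrow \bs$ from above. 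The PBC \eqref{eq:PBC1} with period $T > 0$ would demand $z(t) = z(t+T)$, i.e.\ $e^{-\lambda t} = e^{-\lambda(t+T)}$, which forces $T = 0$---a contradiction. Hence no nontrivial period is admissible along these trajectories.

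The main obstacle will be rigorously controlling the $O(z^2)$ remainder so that the linearized decay rate \emph{and} the strict monotonic, non-finite-time convergence carry over to the nonlinear CFDS dynamics. Classical integer-order stability-by-linearization leans on Grönwall-type inequalities, but the sliding-memory CFDS offers no off-the-shelf Grönwall analog. I would therefore pursue one of two routes: either (i) recast \eqref{eq:reducedFDE} as the Volterra integral equation in \cite[Eq.~(22)]{elgindy2025sustainable} and run a contraction/fixed-point estimate on the neighborhood of $\bs$ to bound the remainder uniformly against the leading exponential, or (ii) exploit the sign structure of the right-hand side of \eqref{eq:reducedFDE}---namely $[\bD - \nu(s)](\Sin - s) > 0$ for $s < \bs$ and $< 0$ for $\bs < s < \Sin$, using monotonicity of $\nu$ on the relevant range---to show that $\{s : s < \bs\}$ and $\{s : s > \bs\}$ are forward invariant. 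Combined with the leading-order exponential decay from Lemma \ref{lem:etey1}, this invariance would preclude sign changes of $z$, yielding the strict monotone approach and, a fortiori, the failure of \eqref{eq:PBC1}.
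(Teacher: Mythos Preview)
Your proposal is correct and follows essentially the same approach as the paper: linearize about $\bs$, apply Lemma~\ref{lem:etey1} to obtain the exponential decay $z(t)=z(0)e^{-\lambda t}$, and read off local asymptotic stability, monotone approach, and failure of the PBC from that formula. The paper deals with the remainder issue you flag precisely via your route~(ii)---it invokes the sign of the full nonlinear right-hand side $[\bD-\nu(s)](\Sin-s)$ (using $\nu'>0$, hence $\nu$ strictly increasing) to conclude ${}^{\text{MC}}_{L}D_t^\alpha s>0$ when $s<\bs$ and $<0$ when $s>\bs$, rather than attempting a Gr\"onwall-type bound.
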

\begin{proof}
To analyze the local stability of $\bs$, we linearize the FDE \eqref{eq:reducedFDE} around the equilibrium. Define the perturbation $z(t) = s(t) - \bs$. Since $\bs$ is constant, ${}_L^{\text{MC}} D_t^\alpha s(t) = {}_L^{\text{MC}} D_t^\alpha z(t)$. Define $f$ as:
\[
f(s(t)) = [\bD - \nu(s(t))] (\Sin - s(t)).
\]
Expand $\nu(s(t)) = \nu(\bs + z(t))$ around $\bs$:
\[
\nu(s(t)) \approx \nu(\bs) + \nu'(\bs) z(t).
\]
Since $\nu(\bs) = \bD$, we have:
\[
\bD - \nu(s(t)) \approx -\nu'(\bs) z(t),
\]
so
\[
f(s(t)) \approx [-\nu'(\bs) z(t)] [(\Sin - \bs) - z(t)] \approx -\nu'(\bs) (\Sin - \bs) z(t),
\]
neglecting higher-order terms. Thus, the linearized FDE is:
\[
{}_L^{\text{MC}} D_t^\alpha z(t) = -k z(t), \quad k = \vartheta^{1-\alpha} \nu'(\bs) (\Sin - \bs) > 0,
\]
with the solution:
\[
z(t) = z(0) e^{-\lambda t},
\]
for some $\lambda > 0$, by Lemma \ref{lem:etey1}. Now, consider the two cases for the initial condition:

\textbf{Case 1: $s(0) = s_0^- < \bs$.} Here, $z(0) = s_0^- - \bs < 0$, so $z(t) < 0$, and $s(t) = \bs + z(t) < \bs$. Since 
\begin{equation*}
\nu'(s) = \frac{KY \mu_{\max} \Sin}{(KY (\Sin - s) + s)^2} > 0,
\end{equation*}
then $\nu$ is strictly increasing on $s([0, T])$. Therefore, $\nu(s(t)) < \nu(\bs) = \bD$, so $\bD - \nu(s(t)) > 0$, implying ${}_L^{\text{MC}} D_t^\alpha s(t) > 0$. This shows that $s$ is monotonically increasing toward $\bs$. The decay $|z(t)| \sim |z(0)| e^{-\lambda t}$ ensures $z(t) \neq 0$ for finite $t$, so $s(t) \neq \bs$. For periodicity, \eqref{eq:PBC1} requires $s(T) = s(0)$:
\[
s(T) = \bs + (s_0^- - \bs) e^{-\lambda t}.
\]
Since $e^{-\lambda t} \in (0, 1)$ and $s_0^- - \bs < 0$, $|z(T)| = |s_0^- - \bs| e^{-\lambda t} < |s_0^- - \bs| = |z(0)|$, so $z(T) = (s_0^- - \bs) e^{-\lambda t} > s_0^- - \bs = z(0)$, implying $s(T) > s_0^-$, violating $s(T) = s(0)$.

\textbf{Case 2: $s(0) = s_0^+ > \bs$.} Here, $z(0) = s_0^+ - \bs > 0$, so $z(t) > 0$, and $s(t) = \bs + z(t) > \bs$. Since $\nu(s(t)) > \nu(\bs) = \bD$, $\bD - \nu(s(t)) < 0$, so we have ${}_L^{\text{MC}} D_t^\alpha s(t) < 0$, implying $s(t)$ is monotonically decreasing toward $\bs$. The decay $z(t) \sim z(0) e^{-\lambda t}$ ensures $z(t) \neq 0$ for finite $t$, so $s(t) \neq \bs$. For periodicity:
\[
s(T) = \bs + (s_0^+ - \bs) e^{-\lambda t}.
\]
Since $e^{-\lambda t} \in (0, 1)$, $s(T) < s_0^+$, violating $s(T) = s(0)$.

In both cases, $|z(t)| \to 0$ as $t \to \infty$, confirming $\bs$ is locally asymptotically stable. The exponential decay $e^{-\lambda t}$ prevents $s(t)$ from reaching $\bs$ in finite time, and monotonicity prevents periodicity unless $s(0) = \bs$, where $s(t) \equiv \bs$.
\end{proof}

\section{Orbital Stability of the OPS}
\label{sec:orbital_stability}
Although Theorem \ref{thm:local_stability} establishes local asymptotic stability of equilibria under constant dilution rates, the OPC $D^*$ derived in Section \ref{sec:optimal_control} is non-constant and bang-bang, switching abruptly between $D_{\min}$ and $D_{\max}$ at finite times $\xi_k$, $k=1,\dots,2m$, for some $m \in \MB{N}$. This structure implies that between switching events, the system evolves under constant dilution rate dynamics. On each interval $[\xi_k, \xi_{k+1})$, the RFOCP dynamics reduces to:
\begin{equation}
{}_L^{\text{MC}} D_t^\alpha s(t) = \vartheta^{1-\alpha} [D_i - \nu(s(t))] (\Sin - s(t)), \quad D_i \in \{D_{\min}, D_{\max}\}.
\end{equation}
Let $\bs_i$ satisfy $\nu(\bs_i) = D_i$. Then Theorem \ref{thm:local_stability} guarantees that perturbations $z(t) = s(t) - \bs_i$ decay exponentially:
\begin{equation}
|z(t)| \leq C e^{-\lambda t}, \quad \lambda > 0,
\end{equation}
on each segment, for some $\lambda$ depending on $\alpha$, $L$, and system parameters.

However, the OPS $s^*(t)$ is not an equilibrium---it is a $T$-periodic trajectory that transiently approaches or departs from $\bs_i$ in a controlled manner. To analyze stability of $s^*(t)$, consider a perturbed trajectory $s(t; s_0)$ with initial condition $s(0) = s^*(0) + \delta: |\delta|$ small, evolving under the same switching control $D^*(t)$, and define the perturbation $z(t) = s(t; s_0) - s^*(t)$. On each constant-control segment, $z(t)$ satisfies the linearized FDE:
\begin{equation}\label{eq:ref121230}
{}_L^{\text{MC}} D_t^\alpha z(t) = \frac{\partial \mathcal{F}}{\partial s}(t, s^*(t)) \cdot z(t) + O(|z|^2),
\end{equation}
where 
\begin{equation}\label{eq:ref12123}
\mathcal{F}(t, s) = \vartheta^{1-\alpha} [D^*(t) - \nu(s)] (\Sin - s). 
\end{equation}
The linear coefficient is negative due to local stability of $\bs_i$ and convexity of $\nu$ (for $KY > 1$), ensuring contraction: $|z(t)|$ decreases on each segment.

Now, define the Poincar\'e map
\[
\C{P}: s(0) \mapsto s(T; s(0)),
\]
where $s(t; s_0)$ is the solution of the RFOCP dynamics:
\[
{}_L^{\text{MC}} D_t^\alpha s(t) = \vartheta^{1-\alpha} [D^*(t) - \nu(s(t))] (\Sin - s(t)),
\]
with fixed OPC $D^*(t)$ (bang-bang, $T$-periodic) and initial condition $s(0) = s_0$. Since the OPS $s^*(t)$ satisfies $s^*(t + T) = s^*(t)$ for all $t$, it follows that
\[
s^*(T) = s^*(0).
\]
By the definition of the solution flow under $D^*(t)$,
\[
s^*(T) = s(T; s^*(0)).
\]
Thus,
\[
\C{P}(s^*(0)) = s^*(0),
\]
so $s^*(0)$ is a fixed point of $\C{P}$. We now prove that $\C{P}$ is a contraction mapping in a neighborhood of $s^*(0)$, which implies orbital asymptotic stability of the OPS $s^*(t)$.

\subsubsection*{Proof of Contraction Mapping}
\label{subsec:POC1}
The OC $D^*(t)$ is bang-bang, switching between $D_{\min}$ and $D_{\max}$ at finitely many times
$0 \leq \xi_1 < \xi_2 < \cdots < \xi_{2m} < T$, with $\xi_{2m+1} := T + \xi_1$, and $D^* \equiv D_k \in \{D_{\min}, D_{\max}\}$ on each interval $I_k := [\xi_k, \xi_{k+1}),\; k = 1, \dots, 2m$. Let $\Delta t_k := \xi_{k+1} - \xi_k > 0$, so $\sum_{k=1}^{2m} \Delta t_k = T$. On each $I_k$, the system evolves under constant dilution rate $D_k$, and has the local equilibrium
\[
\bs_k \in (0, \Sin):\quad \nu(\bs_k) = D_k.
\]
By Theorem \ref{thm:local_stability}, perturbations around $\bs_k$ decay exponentially:
\[
|z(t)| \leq C_k e^{-\lambda_k (t - \xi_k)}, \quad t \in I_k,
\]
for any trajectory starting near $\bs_k$, with
\[
\lambda_k = \lambda(\alpha, L, \vartheta, \nu'(\bs_k), \Sin - \bs_k) > 0,
\]
and $C_k > 0$ (a constant depending only on the system parameters and the segment length). Consider now the two initial conditions:
\begin{enumerate}[label=(\roman*)]
    \item $s_1(0) = s^*(0)$, where $s_1$ evolves to $s_1(T) = s^*(T) = s^*(0)$.
    \item $s_2(0) = s^*(0) + \delta$, where $s_2$ evolves to $s_2(T) = \mathcal{P}(s^*(0) + \delta)$.
\end{enumerate}
Define the error trajectory
\[
z(t) := s_2(t) - s_1(t) = s(t; s^*(0) + \delta) - s^*(t):\quad z(0) = \delta.
\]
On $I_k$, both $s_1(t) = s^*(t)$ and $s_2(t)$ evolve under the same constant control $D_k$. The error $z(t)$ satisfies the linearized FDE \eqref{eq:ref121230} with $\C{F}$ as defined by \eqref{eq:ref12123}. Since $s^*(t) \in (0, \Sin)$ for all $t \in [0, T]$, the periodic reference trajectory lies strictly inside the open interval $(0, \Sin)$ and thus its image $\{s^*(t) : t \in [0, T]\}$ is contained in the compact interval $[0, \Sin]$. Moreover, on each sub-interval $I_k$, the OC is constant: $D^*(t) \equiv D_k$. Finally, the kinetic function $\nu \in C^2([0, \Sin])$. Under these assumptions, a bound on $\partial \C{F}/\partial s$ evaluated along the reference trajectory can be established by the following theorem.

\begin{theorem}\label{thm:jacobian-bound}
Let $s^*(t)$ be a continuous $T$-periodic solution of the system \eqref{eq:reducedFDE} with $\nu \in C^2([0, \Sin])$, $\alpha \in (0,1)$, and $\vartheta > 0$. For each subinterval $I_k \subset [0, T]$ with length $\Delta t_k > 0$, let
\[
\bs_k := \frac{1}{\Delta t_k} \int_{I_k} s^*(t) \, dt,
\]
be the temporal average of $s^*(t)$ over $I_k$. Then
\[
\left| \frac{\partial \C{F}}{\partial s}\left(t, s^*(t)\right) + \nu'(\bs_k) (\Sin - \bs_k) \right|
\leq M_k \, \|s^*(t) - \bs_k\|_\infty, \quad \forall t \in I_k,
\]
where
\[
M_k := \sup_{s \in [0, \Sin]} \left|\nu''(s)\right| < \infty,
\]
and
\[
\|s^* - \bs_k\|_\infty := \sup_{t \in I_k} |s^*(t) - \bs_k|.
\]
\end{theorem}
\begin{proof}
Evaluating the partial derivative of $\C{F}$ with respect to $s$ along the reference trajectory $s^*$ gives
\[
\frac{\partial \C{F}}{\partial s}\left(t, s^*(t)\right) = -\vartheta^{1-\alpha} \left[ \nu'\left(s^*(t)\right) (\Sin - s^*(t)) + D(t) - \nu\left(s^*(t)\right) \right].
\]
On each subinterval $I_k = [\xi_k, \xi_{k+1}]$, approximate $s^*(t) \approx \bs_k$ and $D(t) \approx D_k$, and define
\[
\kappa_k := \nu'(\bs_k) (\Sin - \bs_k) + D_k - \nu(\bs_k) > 0,
\]
so that the linearization error is $\displaystyle{\frac{\partial \C{F}}{\partial s}}\left(t, s^*(t)\right) + \vartheta^{1-\alpha} \kappa_k$. Define the auxiliary function
\[
h(t, s) := \nu'(s) (\Sin - s) + D(t) - \nu(s),
\]
so the error becomes
\[
\vartheta^{1-\alpha} \left( h(t, \bs_k) - h\left(t, s^*(t)\right) \right).
\]
Assume $D \in C([0, T])$ and $\nu \in C^2([0, \Sin])$. Then
\[
\frac{\partial h}{\partial s}(t, s) = \nu''(s) \, (\Sin - s) - 2 \nu'(s).
\]
Thus
\[
\left| \frac{\partial h}{\partial s}(t, s) \right| \leq M_k \, \Sin + 2 \sup_{s \in [0, \Sin]} |\nu'(s)| =: C_k < \infty,
\]
uniformly on $I_k$. By the Mean Value Theorem, for each $t \in I_k$ there exists $\xi_t$ between $s^*(t)$ and $\bs_k$ such that
\[
h\left(t, s^*(t)\right) - h(t, \bs_k) = \frac{\partial h}{\partial s}(t, \xi_t) \left( s^*(t) - \bs_k \right).
\]
Thus,
\[
\left| h\left(t, s^*(t)\right) - h(t, \bs_k) \right| \leq C_k \, |s^*(t) - \bs_k| \leq C_k \, \|s^* - \bs_k\|_{L^\infty(I_k)}.
\]
Therefore,
\[
\left| \frac{\partial \C{F}}{\partial s}\left(t, s^*(t)\right) + \vartheta^{1-\alpha} \kappa_k \right| \leq \vartheta^{1-\alpha} C_k \, \|s^* - \bs_k\|_{L^\infty(I_k)},
\]
where the uniform deviation
\[
\|s^* - \bs_k\|_{L^\infty(I_k)} := \sup_{t \in I_k} |s^*(t) - \bs_k|,
\]
is finite since $s^*$ is continuous on the compact interval $I_k$. Over all $2m$ subintervals,
\[
\max_{k=1,\dots,2m} \|s^* - \bs_k\|_{L^\infty(I_k)},
\]
is bounded by a constant depending only on the reference orbit $s^*(t)$, independent of perturbations. We can absorb $\vartheta^{1-\alpha} C_k$ and this maximum deviation into a new bounded constant $M_k$ (independent of $t$ and perturbations) to obtain
\[
\left| \frac{\partial \C{F}}{\partial s}\left(t, s^*(t)\right) + \vartheta^{1-\alpha} \kappa_k \right| \leq M_k \|z\|_{L^\infty(I_k)},
\]
for $z(t) = s(t) - s^*(t)$. Thus, for sufficiently small $\|z\|$, the nonlinear perturbation is negligible, and
\[
{}_L^{\text{MC}} D_t^\alpha z(t) \approx -\vartheta^{1-\alpha} \kappa_k z(t), \quad \kappa_k > 0.
\]
By Lemma \ref{lem:etey1}, the solution satisfies
\[
|z(t)| \leq |z(\xi_k)| e^{-\lambda_k (t - \xi_k)}, \quad t \in I_k,
\]
with $\lambda_k > 0$ such that $e^{-\lambda_k \Delta t_k} < 1$. Since $|z(0)| = |\delta|$, we have after segment $I_1$ (at $t = \xi_2$):
\[
|z(\xi_2)| \leq |z(\xi_1)| e^{-\lambda_1 \Delta t_1} \leq |\delta| e^{-\lambda_1 \Delta t_1}.
\]
After segment $I_2$ (at $t = \xi_3$):
\[
|z(\xi_3)| \leq |z(\xi_2)| e^{-\lambda_2 \Delta t_2} \leq |\delta| e^{-\lambda_1 \Delta t_1 -\lambda_2 \Delta t_2}.
\]
After all $2m$ segments (at $t = T$):
\[
|z(T)| \leq |\delta| \prod_{k=1}^{2m} e^{-\lambda_k \Delta t_k} = \rho\,|\delta|,
\]
where
\[
\rho := \exp\left( -\sum_{k=1}^{2m} \lambda_k \Delta t_k \right) < 1,
\]
since $\lambda_k, \Delta t_k > 0$. Thus,
\[
|\C{P}(s^*(0) + \delta) - s^*(T)| = |z(T)| \leq \rho |\delta|.
\]
Apply $\C{P}$ iteratively, and define the sequence of perturbations $(\delta_n)_{n=0}^\infty$, where each $\delta_n$ represents the deviation from the optimal periodic orbit after $n$ complete periods:
\begin{align*}
\delta_0 &:= \delta, \\
\delta_1 &:= \C{P}(s^*(0) + \delta) - s^*(T), \\
&\vdots \\
\delta_{n+1} &:= \C{P}^{n+1}(s^*(0) + \delta) - s^*(T) = \C{P}\bigl( \C{P}^n(s^*(0) + \delta) \bigr) - s^*(T),
\end{align*}
for $n \geq 0$. Then
\[
|\delta_n| \leq \rho^n |\delta| \to 0, \quad \text{as } n \to \infty,
\]
since $\rho < 1$. This shows that any trajectory starting sufficiently close to the OPS at $t=0$ returns closer to the orbit after each period $T$, and converges to the periodic orbit as $n \to \infty$. This establishes orbital asymptotic stability of the OPS $s^*(t)$.
\end{proof}
For a comprehensive exposition on stability analysis of fractional differential equations, Poincar\'{e} map analysis, and contraction arguments, the reader may consult \cite{khalil2002nonlinear,li2011survey,guckenheimer2013nonlinear}.

\begin{remark}
Numerical observations (Figure \ref{fig:Fig5}) suggest that lower $\alpha$ values, corresponding to stronger memory effects, may be associated with increased damping characteristics that require more frequent control switching to overcome system inertia. While Lemma \ref{lem:etey1} guarantees the existence of a positive damping coefficient $\lambda_k$ for each $\alpha \in (0,1)$, the precise functional relationship between $\alpha$ and $\lambda_k$ remains an open analytical question, though our simulations indicate that stronger memory generally improves system robustness at the cost of requiring more aggressive control intervention.
\end{remark}

\section{Perturbation Analysis}
\label{app:perturbation_analysis}
This section analyzes the relationship between $\nu\av$ and the average dilution rate $D\av$ for a $T$-periodic solution $s$ of the FDE \eqref{eq:reducedFDE}, when $\alpha \in (0,1)$ and $s(t) \in [0, \Sin)$. We use a perturbation approach around the steady-state to show that $\nu\av < D\av$ is only possible for non-constant solutions with small perturbations when $K Y < 1$.

Consider the steady-state where $D(t) = \bD = \nu(\bs)$, $s(t) = \bs$. Perturb the control and state as:
\[
\Deps(t) = \bD + \varepsilon v(t):\,v\av = 0,\quad \seps(t) = \bs + \varepsilon z(t),
\]
where $\varepsilon > 0$ is sufficiently small, and both $v$ and $z$ are $T$-periodic. The FDE becomes:
\[
{}_L^{\text{MC}} D_t^\alpha (\bs + \varepsilon z(t)) = \vartheta^{1-\alpha} [\bD + \varepsilon v(t) - \nu(\bs + \varepsilon z(t))] (\Sin - \bs - \varepsilon z(t)).
\]
Since ${}_L^{\text{MC}} D_t^\alpha \bs = 0$ and $\nu(\bs) = \bD$, expand $\nu(s)$:
\[
\nu(\bs + \varepsilon z(t)) \approx \nu(\bs) + \varepsilon \nu'(\bs) z(t) + \frac{\varepsilon^2}{2} \nu''(\bs) z^2(t).
\]
For the linear approximation, neglect $O(\varepsilon^2)$ terms:
\[
\varepsilon\,{}_L^{\text{MC}} D_t^\alpha z(t) \approx \vartheta^{1-\alpha} [\varepsilon v(t) - \varepsilon \nu'(\bs) z(t)] (\Sin - \bs).
\]
Integrate over $[0, T]$, noting that $\int_0^T {}_L^{\text{MC}} D_t^\alpha z(t) \, dt = 0$ by Lemma \ref{lem:1}:
\begin{equation}\label{eq:xbcvj1}
0 = \vartheta^{1-\alpha} \varepsilon\,(\Sin - \bs) \int_0^T [v(t) - \nu'(\bs) z(t)] \, dt.
\end{equation}
Since $v\av = 0$, Eq. \eqref{eq:xbcvj1} implies:
\[
-\nu'(\bs) \int_0^T z(t) \, dt = 0 \implies z_{\av} = 0,
\]
since $\nu' > 0$. Computing the difference:
\[
\Deps(t) - \nu(\seps(t)) \approx \varepsilon v(t) - \varepsilon \nu'(\bs) z(t).
\]
Therefore,
\begin{align*}
D\epsav - [\nu(\seps)]\av &= \frac{1}{T} \int_0^T [\Deps(t) - \nu(\seps(t))] \, dt\\
&\approx \varepsilon v\av - \varepsilon \nu'(\bs) z\av = 0.
\end{align*}
Thus, to first order, $D\epsav \approx [\nu(\seps)]\av$. Include the second-order term:
\[
\Deps(t) - \nu(\seps(t)) \approx \varepsilon v(t) - \varepsilon \nu'(\bs) z(t) - \frac{\varepsilon^2}{2} \nu''(\bs) z^2(t).
\]
Integrate:
\[
D\epsav - [\nu(\seps)]\av \approx -\frac{\varepsilon^2}{2} \nu''(\bs) \left(z^2\right)\av.
\]
If $K Y < 1$, the function $\nu$ is strictly concave on the interval $s \in [0, \Sin]$, implying that $\nu''(\bs) < 0$. For a non-constant function $z$, it follows that $\left(z^2\right)\av > 0$. Consequently,
\[
D\epsav - [\nu(\seps)]\av > 0,
\]
which implies that 
\begin{equation}\label{eq:ssvegdflkl1}
[\nu(\seps)]\av < D\epsav = \bD.
\end{equation}
Conversely, if $K Y \ge 1$, the function $\nu$ is convex on $s \in [0, \Sin]$, such that $\nu''(\bs) \ge 0$, with strict inequality when $K Y > 1$. Therefore,
\[
D\epsav - [\nu(\seps)]\av \le 0,
\]
indicating that 
\begin{equation}\label{eq:dskvbwk11121}
[\nu(\seps)]\av \ge D\epsav = \bD, 
\end{equation}
with equality holding when $K Y = 1$.

This perturbation analysis demonstrates that, for small, non-constant perturbations around the steady state, the average value of $\nu$ over one cycle is less than the average dilution rate $\bD$ if and only if $K Y < 1$.

\section{Derivation of the Right-Sided CFDS}
\label{sec:DRSCFDS1}
This appendix presents the formulation and justification of the right-sided CFDS of order $0 < \alpha < 1$, denoted by ${{}^{\text{MC}}_{L+}D_t^\alpha f}$, which is particularly useful for modeling forward-looking memory effects in fractional-order dynamical systems. The right-sided CFDS is the forward-time analogue of the left-sided CFDS ${}^{\text{MC}}_{L}D_t^\alpha f$ used mainly in this paper.

Let $L > 0$, and suppose $f \in W^{1,1}_{\text{loc}}([t, t+L])$. The right-sided CFDS is defined as
\begin{equation}\label{eq:LSCFDSM1}
{{}^{\text{MC}}_{L+}D_t^\alpha f} := -\frac{1}{\Gamma(1 - \alpha)} \int_t^{t+L} \frac{f'(\tau)}{(\tau - t)^{\alpha}}\, d\tau.
\end{equation}
This operator is well-defined a.e., since the kernel $(\tau - t)^{-\alpha} \in L^q(t, t+L)$ for all $q < 1/\alpha$, and the integrability of $f' \in L^1_{\text{loc}}([t,t+L])$ ensures the convergence of the integral. This operator definition is the finite-memory version of the classical right-sided Caputo FD of $f$ on the interval $[t, b]$, given by
\begin{equation}
{}^{\mathrm{C}}D_{b-}^\alpha f(t) := -\frac{1}{\Gamma(1 - \alpha)} \int_t^{b} \frac{f'(\tau)}{(\tau - t)^{\alpha}}\, d\tau,
\end{equation}
by replacing the upper limit $b$ with $t + L$. Notice that as $L \to b - t$, the sliding memory version recovers the classical right-sided Caputo FD:
\begin{equation}
\lim_{L \to b - t} {{}^{\text{MC}}_{L+}D_t^\alpha f(t)} = {}^{\mathrm{C}}D_t^\alpha f(t).
\end{equation}
Moreover, as $\alpha \to 1^{-}$, we recover the classical first-order derivative with a negative sign:
\begin{equation}
\lim_{\alpha \to 1^{-}} {{}^{\text{MC}}_{L+}D_t^\alpha f(t)} = -f'(t).
\end{equation}

\section{Numerical Optimization Techniques for Solving the RFOCP}
\label{app:NSMfStR1}
The continuous RFOCP is transformed into a finite-dimensional NLP problem through discretization using the FG-PS method \cite{elgindy2024fouriera,elgindy2024fourierb}. This method is particularly well-suited for problems with periodic solutions. 
The time domain $[0, T]$ is discretized into $N$ equispaced collocation points $t_j = j T/N$ for $j=0, \dots, N-1$. Collocation points can be chosen from various distributions (e.g., Chebyshev, Legendre, or equispaced). Here, equispaced points are selected due to their compatibility with Fourier expansions in the FG-PS method, enabling efficient FFT-based computations and natural handling of periodic boundary conditions. The state variables and control inputs are approximated by their values at these collocation points. The FD term is handled using a FG-PS-based integration matrix, which is pre-computed. This matrix transforms the FDE \eqref{eq:reducedFDE} into a system of algebraic equations. The discretized RFOCP is solved as a constrained NLP problem. The objective function to be minimized is the average substrate concentration $s\av$, which is directly computed from the discretized substrate values. The constraints include the dynamic equations of the system, the average dilution rate constraint, and the bounds on the state and control variables. The MATLAB \texttt{fmincon} function is employed as the optimization solver, configured to use the \texttt{sqp} algorithm. This choice is motivated by \texttt{sqp}'s effectiveness in solving constrained nonlinear optimization problems, particularly when the objective function is continuous and the formulation includes general nonlinear constraints and bound constraints. In this context, it provides high accuracy and robust constraint satisfaction, making it well-suited for the discretized RFOCP.

\subsection{Edge-Detection Control Correction}
\label{sec:EDCC}
After obtaining a predicted OC profile from \texttt{fmincon}, the MATLAB code applies an edge-detection method to refine the control, particularly for bang-bang type controls which are characterized by abrupt switches between their minimum and maximum values. This correction is crucial because numerical optimization methods, especially those based on pseudospectral collocation, can introduce Gibbs phenomenon artifacts around discontinuities, leading to poor representations of true bang-bang controls. The method employed here is based on the principles outlined in \cite{elgindy2023new} and further upgraded in \cite{elgindy2024optimal}.

The core idea of the edge-detection method is to accurately identify the switching points in the OC signal and then reconstruct a bang-bang control based on these detected points. This approach exploits the fact that the Gibbs phenomenon, while a numerical artifact, provides a strong indicator of the location of discontinuities through its characteristic overshoots and undershoots. As stated in \cite{elgindy2024optimal}, quoting \cite{elgindy2023new}:
\begin{quote}
`While Gibbs phenomenon is generally considered a demon that needs to be cast out, we shall demonstrate later that it is rather 'a blessing', in view of the current work, that can be constructively used to set up a robust adaptive algorithm. In particular, the over- and undershoots developed near a discontinuity in the event of a Gibbs phenomenon provide an excellent means of detecting one.'
\end{quote}

The MATLAB code implements this correction in the following steps:

\begin{description}
    \item[Step 1] The predicted OC, $D^*$, is used to compute its Fourier coefficients, which capture the global spectral information of $D$, including potential jump discontinuities.
    \item[Step 2] An edge-detection solver is invoked to estimate the discontinuity locations and reconstruct an approximate bang-bang control. This function analyzes the Fourier interpolant constructed from the coefficients and evaluates where significant changes in the pseudospectral profile occur. 
    \item[Step 3] Based on the estimated discontinuities and the approximated control, a corrected bang-bang control $D^*$ is generated. This reconstruction effectively eliminates Gibbs oscillations and yields a physically meaningful bang-bang structure.
\item[Step 4] The reconstructed OC values at the same collocation set of $N$ equispaced is then used as inputs to compute the corrected substrate concentration $s^*$ by solving the discretized FED \eqref{eq:reducedFDE} using MATLAB's \texttt{fsolve} solver, with the predicted substrate concentration values provided as initial guesses, closely following the predictor-corrector framework in \citet{elgindy2023new}.
\end{description}

This two-stage approach--predicted pseudospectral optimization followed by edge detection and reconstruction--yields a robust and accurate method for solving the RFOCP that admits bang-bang solutions. It addresses the deficiencies of conventional pseudospectral methods in resolving discontinuities. 

\begin{remark}
Unlike the correction stage in the Fourier-Gegenbauer predictor-corrector method developed in \cite{elgindy2023new}, which requires collocation at shifted Gegenbauer-Gauss points to enable the use of barycentric shifted Gegenbauer quadratures, the present approach offers greater flexibility. Specifically, the current correction stage permits collocation at the same equally spaced points used in the prediction stage, as the FG-PS method can approximate the CFDS at those equispaced collocation points within the solution domain. In contrast, the use of shifted Gegenbauer quadratures in \cite{elgindy2023new} confines integration to the solution values at the shifted Gegenbauer--Gauss nodes.
\end{remark}

\section{Solution Algorithm}
\label{app:algorithm}
The following algorithm provides a comprehensive framework for solving the FOCP, beginning with the original 2D system and proceeding through numerical optimization.

\begin{algorithm}[H]
\caption{Solution Algorithm for the FOCS}
\label{alg:main}
\begin{algorithmic}[1]
\REQUIRE Original 2D FOCS: Eqs. \eqref{eq:sysdyn1}--\eqref{eq:sysdyn2} with constraints \eqref{eq:IntConsD1}--\eqref{eq:Boundd3}, \eqref{eq:PBC1}--\eqref{eq:PBC3}.
\ENSURE System parameters satisfy existence conditions (Theorem \ref{thm:existence}).

\textbf{Phase 1: System Reduction.}
\STATE Apply Transformation \eqref{eq:Transs1}.
\STATE Derive the 1D FDE \eqref{eq:trans2}.
\STATE Obtain the biomass relation \eqref{eq:biomass2}.
\STATE Reduce to the 1D RFOCP dynamics \eqref{eq:reducedFDE}.

\textbf{Phase 2: Numerical Solution.}
\STATE Discretize the time domain $[0,T]$ into $N$ equispaced collocation points.
\STATE Approximate the CFDS using FG-PS integration matrix.
\STATE Formulate and solve the constrained NLP using \texttt{fmincon} with SQP algorithm.
\STATE Apply edge-detection correction for bang-bang control reconstruction.
\STATE Interpolate solutions to $M$ points for high-resolution visualization.

\RETURN Optimal solution $(s^*, x^*, D^*)$ and performance metric $J(D^*)$.
\end{algorithmic}
\end{algorithm}

\bibliographystyle{model1-num-names}
\bibliography{Bib}

\end{document}